\documentclass[reqno]{amsart}
\usepackage{amsfonts}
\usepackage{amsfonts,amssymb,amsmath,xfrac}
\usepackage[latin1]{inputenc}
\usepackage{color}
\usepackage{multicol,caption,graphicx,float,lipsum}

\usepackage{graphicx}
\usepackage[active]{srcltx}
\usepackage{wrapfig}
\usepackage{multicap}
\usepackage{fullpage}
\usepackage{hyperref}
\usepackage{tikz}

\newtheorem{teo}{Theorem}[section]
\newtheorem{prop}[teo]{Proposition}
\newtheorem{lema}[teo]{Lemma}

\newtheorem{cor}[teo]{Corolary}

\theoremstyle{definition}
\newtheorem{defn}[teo]{Definition}

\theoremstyle{remark}
\newtheorem*{nota}{Remark}

\newcommand{\PP}{\mathbb{P}}
\newcommand{\EE}{\mathbb{E}}
\newcommand{\RR}{\mathbb{R}}
\newcommand{\QQ}{\mathbb{Q}}
\newcommand{\NN}{\mathbb{N}}
\newcommand{\ZZ}{\mathbb{Z}}

\newcommand{\MM}{\mathcal{M}}
\newcommand{\HH}{\mathcal{H}}
\newcommand{\charf}[1]{\mathbf{1}_{#1}}

\newcommand{\<}{\langle}
\renewcommand{\>}{\rangle}
\newcommand{\norm}[1]{\Vert #1 \Vert}
\newcommand{\floor}[1]{\lfloor #1 \rfloor}
\newcommand{\ind}[1]{\textbf{1}_{#1}}                           
\newcommand{\Ind}[1]{\textbf{1}{\lbrace{#1}\rbrace}}           
\newcommand{\mc}[1]{\mathcal{#1}}
\newcommand{\pfrac}[2]{\genfrac{}{}{}{1}{#1}{#2}}

\newcommand{\lrp}[1]{\left(#1\right)}                     
\newcommand{\lrc}[1]{\left[#1\right]}

\newcommand{\var}{\mathrm{Var}}

\title[Exclusion Process with Slow  Boundary]{Exclusion Process with Slow  Boundary}

\subjclass[2010]{60K35,26A24,35K55}

\author{Rangel Baldasso}
\address{IMPA\\Estrada Dona Castorina, 110\\
Horto, Rio de Janeiro\\
Brasil}
\curraddr{}
\email{baldasso@impa.br}

\author{Ot\'avio Menezes}
\address{IMPA\\Estrada Dona Castorina, 110\\
Horto, Rio de Janeiro\\
Brasil}
\curraddr{}
\email{omenezes@impa.br}

\author{Adriana Neumann}
\address{UFRGS, Instituto de Matem\'atica, Campus do Vale, Av. Bento Gon\c calves, 9500. CEP 91509-900, Porto Alegre, Brasil}
\curraddr{}
\email{aneumann@mat.ufrgs.br}

\author{Rafael R. Souza}
\address{UFRGS, Instituto de Matem\'atica, Campus do Vale, Av. Bento Gon\c calves, 9500. CEP 91509-900, Porto Alegre, Brasil}
\curraddr{}
\email{rafars@mat.ufrgs.br}

\date{}

\begin{document}

\begin{abstract}

We study the hydrodynamic and the hydrostatic behavior of the Simple Symmetric Exclusion Process with  \emph{slow boundary}. The term \emph{slow boundary} means that particles can be born or die at the boundary sites, at a rate proportional to $N^{-\theta}$, where $\theta > 0$ and $N$ is the scaling parameter. In the bulk, the  particles exchange rate  is equal to $1$. In the  hydrostatic scenario, we obtain three different linear profiles, depending on the value of the parameter $\theta$; in the hydrodynamic scenario, we obtain that the time evolution of the spatial density of particles, in the diffusive scaling, is given by the weak solution of the heat equation, with boundary conditions that depend on $ \theta $. If $\theta \in(0,1)$, we get  Dirichlet boundary conditions, (which is the same behavior if $\theta=0$, see \cite{f});
if $\theta=1$,  we get Robin boundary conditions; and, if $\theta\in(1,\infty)$,  we get Neumann boundary conditions. 

\end{abstract}

\maketitle

\section{Introduction}

The problem we address here is a complete characterization of the hydrostatic and hydrodynamic scenario for the Symmetric Simple Exclusion Process (SSEP) with slow boundary. This particle system is a simple model of mass transfer between reservoirs of different densities. The SSEP is described by particles that move as independent symmetric random walks in $\{1, \dots, N-1\}$, under the exclusion rule, that says two particles cannot occupy the same site at the same time. By slow boundary, we mean that particles can enter the system at site 1 with rate $\sfrac{c\alpha}{N^{\theta}}$ or leave with rate $\sfrac{c(1-\alpha)}{N^{\theta}}$, while at the site $N-1$, particles exhibit a similar behavior, but with rates $\sfrac{c\beta}{N^{\theta}}$ for entrance and rate $\sfrac{c(1-\beta)}{N^{\theta}}$ for leaving the system. We consider the parameters $\alpha, \beta$ in $(0,1)$, $c>0$ and $\theta \geq 0$.

\begin{center}
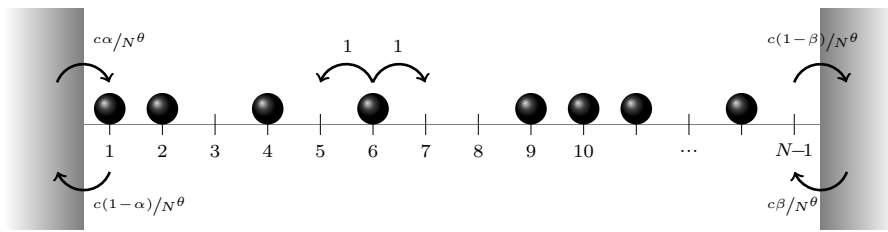
\begin{figure}[h!]
\begin{tikzpicture}[scale=0.7]
%

\draw[step=1cm,gray,very thin] (0,0) grid (15,0);

\foreach \y in {0,...,15}{
\draw[color=black,very thin] (\y,-0.2)--(\y,0.2);
}

\foreach \y in {1,...,2}{
\shade[ball color=black] (\y,0.3) circle (.3);
}

\foreach \y in {4,...,4}{
\shade[ball color=black] (\y,0.3) circle (.3);
}

\foreach \y in {6,...,6}{
\shade[ball color=black] (\y,0.3) circle (.3);
}
\foreach \y in {9,...,10}{
\shade[ball color=black] (\y,0.3) circle (.3);
}

\foreach \y in {11,...,11}{
\shade[ball color=black] (\y,0.3) circle (.3);
}
\foreach \y in {13,...,13}{
\shade[ball color=black] (\y,0.3) circle (.3);
}

\shade[left color=white,right color=gray] (-1, 2.2) rectangle  (0.5,-2);

\shade[right color=white,left color=gray] (14.5, 2.2) rectangle  (16,-2);


\draw [->,color=black, line width=1] (0,0.8) arc (160:20:15pt);
\draw (0.5,1.2) node [color=black, above right] {{$\scriptstyle{\sfrac{c\alpha}{N^{\theta}}}$}};

\draw [->,color=black, line width=1] (1,-0.9) arc (-20:-160:15pt);
\draw (0.5,-1.2) node [color=black, below right] {{$\scriptstyle{\sfrac{c(1\!-\!\alpha)}{N^{\theta}}}$}};

\draw [->,color=black, line width=1] (14,0.8) arc (160:20:15pt);
\draw (13.3,1.2) node [color=black, above right] {{$\scriptstyle{\sfrac{c(1\!-\!\beta)}{N^{\theta}}}$}};

\draw [->,color=black, line width=1] (15,-0.9) arc (-20:-160:15pt);
\draw (13.3,-1.2) node [color=black, below right] {{$\scriptstyle{\sfrac{c\beta}{N^{\theta}}}$}};

\draw [->,color=black, line width=1] (6,0.8) arc (160:20:15pt);
\draw (6.2,1.2) node [color=black, above right] {{$\scriptstyle{1}$}};
\draw [<-,color=black, line width=1] (5,0.8) arc (160:20:15pt);
\draw (5.8,1.2) node [color=black, above left] {$\scriptstyle{1}$};

\draw (1,-0.5) node [color=black] {$\scriptstyle{1}$};
\draw (2,-0.5) node [color=black] {$\scriptstyle{2}$};
\draw (3,-0.5) node [color=black] {$\scriptstyle{3}$};
\draw (4,-0.5) node [color=black] {$\scriptstyle{4}$};
\draw (5,-0.5) node [color=black] {$\scriptstyle{5}$};
\draw (6,-0.5) node [color=black] {$\scriptstyle{6}$};
\draw (7,-0.5) node [color=black] {$\scriptstyle{7}$};
\draw (8,-0.5) node [color=black] {$\scriptstyle{8}$};
\draw (9,-0.5) node [color=black] {$\scriptstyle{9}$};
\draw (10,-0.5) node [color=black] {$\scriptstyle{10}$};
\draw (12,-0.5) node [color=black] {$\scriptstyle{...}$};
\draw (14,-0.5) node [color=black] {$\scriptstyle{N\!-\!1}$};

\end{tikzpicture}
\caption{The model.}
\end{figure}
\end{center}

There are many works that consider similar models. In \cite{mariaeulalia1, me1} and \cite{mariaeulalia2}, the authors consider a model where deaths can only happen in an interval around the site 1 and entrance is allowed on an interval around the right boundary. Their model presents a current exchange between the two reservoirs and shares some similarities with the case $\theta=1$. Another case already studied in the literature (see \cite{f} and \cite{lmo}) is when the boundary is not slowed (that corresponds to $\theta=0$).
Finally, we mention the article \cite{b}. They consider an exclusion process in the discrete torus $\ZZ/N\ZZ$. At all bonds $(x,x+1)$, the exchange rate is equal to 1, except for a single special bond, which is easier to cross from left to right than from right to left. Their model presents a surprising phenomenon, which the authors call "battery effect". This effect consists in a current of particles through the system.


In the present paper, our goal is to understand the collective behavior of the time evolution of the SSEP with slow boundary. We study the limit of the time evolution of the spatial density of particles when the time and space are re-scaled in a suitable way. This scaling limit leads to the \emph{hydrodynamic limit}, which is usually characterized by the weak solution of some partial differential equation (PDE), called the \emph{hydrodynamic equation}.

The model analyzed here was motivated by a process considered in \cite{fgn1,fgn2,fgn3}, the SSEP with slow bond. This is a particle system in the discrete torus $\ZZ/N\ZZ$ where particles in neighbouring sites exchange positions with rate 1, except for only one edge, the \emph{slow bond}, where the exchange occurs with rate $N^{-\beta}$. The works \cite{fgn1,fgn2} exhibit different hydrodynamic behaviors, depending on the range of $\beta$. Nevertheless, the hydrodynamic equation in all cases is a parabolic equation, whose boundary conditions vary as the value of $\beta$ changes, exhibiting three different regimes. The intuitive idea is that, if we "open" the discrete torus, then the slow bond becomes a slow boundary. There, in the case $\beta=1$, the boundary conditions show a connection between the extremes $0$ and $1$. A natural question to ask is what is the role of the slow bond on this behavior.

Let us concentrate on the results of the present paper. As in \cite{fgn1,fgn2}, the model we study here has three different phases, depending on the value of $\theta$. The first case is $\theta \in [0,1)$. Here the hydrodynamic equation is the same as in the case $\theta =0$, the heat equation with Dirichlet boundary conditions
\begin{equation}\label{Dirichlet-intro}
\left\{
\begin{array}{l}
\partial_{t} \rho\,(t,u)  =  \partial_u^2 \rho\,(t,u) , \\
 \rho\, (t,0)  =  \alpha, \quad \rho \,(t,1)  =  \beta. \\
\end{array}
\right . 
\end{equation}
In the case $\theta >1$, the boundary is so slow that, as far as hydrodynamics is concerned, the system does not exchange mass with the reservoirs. That is to say, the hydrodynamic equation has Neumann boundary conditions:
\begin{equation}\label{Neumann-intro}
\left\{
\begin{array}{l}
\partial_{t} \rho\,(t,u)  =  \partial_u^2 \rho\,(t,u), \\
\partial_u \rho\, (t,0)  = \partial_u \rho\, (t,1)  = 0.
\end{array}
\right . 
\end{equation}
In the critical case $\theta=1$, the model presents a different macroscopic behavior. The hydrodynamic equation has Robin boundary conditions
\begin{equation}\label{Robin-intro}
\left\{
\begin{array}{l}
\partial_{t} \rho\,(t,u)  =  \partial_u^2 \rho\,(t,u), \\
\partial_{u} \rho\,(t,0)  = c(\rho\,(t,0)-\alpha), \\
\partial_{u} \rho\,(t,1) =  c(\beta-\rho\,(t,1)) .
\end{array}
\right . 
\end{equation}
These boundary conditions show that the rate of mass transfer through each site of the boundary depends only on what is happening in the corresponding site. To compare with a closely related result in the literature, the behavior is different from what is observed in the case $\beta=1$ of \cite{fgn1,fgn2}.


As expected, the main difficulty in the proof of the hydrodynamic limit is the characterization of the limit points. We overcome these difficulties with the use of Replacement Lemmas and Energy Estimates. However,  in general, the Bernoulli product measures are not invariant for the dynamics. In spite of that, we use these measures to prove such lemmas and estimates. The use of these measures instead of the invariant measures come with some cost, that vanishes in the limit. The case $\theta \in (0,1)$ has some further technical details, that we overcome with an idea from \cite{f}.

In order to understand the behavior of these invariant measures, we look at the profile associated to them. This is the \emph{hydrostatic limit}. Even though \cite{d} gives a characterization of these measures, we follow a different approach, that does not rely on their explicit computations.
As expected, the invariant measures are associated to the stationary solutions of the corresponding hydrodynamic equations. 
When $\theta>1$, the profile is constant equal to $(\alpha+\beta)/2$.
When $ \theta <1$, the profile is linear, with $\rho(0)=\alpha$ and $\rho(1)=\beta$.
\footnote{ This is the same profile found in \cite{lmo}. In this work, the authors study fluctuations for the case $\theta=0$.} 
The most interesting case is when $\theta =1$: the stationary solution $\rho:[0,1] \to \RR$ is linear with $\rho(0)=\alpha +(\beta - \alpha)/(c+2)$ and $\rho(1)=\beta - (\beta - \alpha)/(c+2)$. Here one can see the influence of the boundary 
As $c$ runs from $0$ to $\infty$, the linear profile in the hydrostatic limit interpolates between the constant profile ($\theta >1$) and the profile that goes from $\alpha$ to $\beta$ ($\theta <1$). 

The main challenge in the proof of hydrostatic limit is to get a good bound on the two point correlation function of the invariant measure. We use a coupling argument to relate the two point correlation function with the occupation time of a certain random walk.

The present work is divided as follows. In Section \ref{s2}, we introduce the notation and the main results. In Section \ref{s3}, we prove the hydrostatic behavior stated in Theorem \ref{t1}. The remaining of the paper is dedicated to the proof of Theorem \ref{t2}. In Section \ref{s4}, we prove tightness for any range of the parameter $\theta$. In Section \ref{s5}, we prove the {\em{Replacement Lemma}} and we establish the Energy Estimates, which are fundamental steps towards the proof. In Section \ref{s6}, we characterize the limit points as weak solutions of the corresponding partial differential equations. Finally, we establish uniqueness of weak solutions in Section \ref{s7}.

\section{Notation and main results}\label{s2}

\subsection{The Model}
Let $N\in\NN$ and $\theta >0$. We denote by $I_N$ the set $\{1,\ldots, N-1\}$, which will be called \emph{bulk}. The sites (points) of $ I_{N} $ will be denoted by  $ x $, $ y $ and $ z $, while the macroscopic variables (points of the interval $ [0,1] $) will be denoted by $ u $, $ v $ and $ w $. The microscopic state space will be denoted by $ \Omega_N:=\{0,1\}^{I_{N} }$ and its elements, called configurations, will be denoted by $ \eta $ and $ \xi $. Therefore, $ \eta(x) \in \{0,1\} $ represents the number of particles at site $ x $ for the configuration $ \eta $.

We split the generator of the SSEP with slow boundary, $L_{N}$, in three parts, one for the bulk dynamics, $L_{N,0}$, and two for the boundary dynamics, $L_{N,b}^\alpha$ and $L_{N,b}^\beta$, in the following way:
\begin{equation}\label{ger}
L_N =L_{N,0}+L_{N,b} ^\alpha+L_{N,b}^\beta.
\end{equation}
 The  generator of the  bulk dynamics  acts on functions $f:\Omega_N\to\RR$ as
\begin{equation*}
(L_{N,0}f)(\eta)=\sum_{x=1}^{N\!-\!2} \big[f(\eta^{x,x+1})-f(\eta)\big],
\end{equation*}
where $\eta^{x,x+1}$ is the configuration obtained from  $\eta$ by exchanging the variables $\eta(x)$ and $\eta(x+1)$: $\eta^{x,x+1}(x)=\eta(x+1)$,  $\eta^{x,x+1}(x+1)=\eta(x)$ and $\eta^{x,x+1}(y)=\eta(y)$, if $y\neq x,x+1$.
  The  generator at the left hand side of the boundary acts on  functions $f:\Omega_N\to\RR$ as
\begin{equation*}
\begin{split}
(L_{N,b}^\alpha f)(\eta)&:= cN^{-\theta}r_\alpha(\eta)\lrc{f(\eta^{1})-f(\eta)},
\end{split}
\end{equation*}
where\begin{equation}\label{ralpha}
r_\alpha(\eta)=\alpha (1-\eta(1))+(1-\alpha)\eta(1)
\end{equation} and the configuration $\eta^1$ flips the occupancy only in the site $1$: $\eta^1(1)=1-\eta(1)$ and $\eta^1(y)=\eta(y)$,  for all $y\in \{2,\dots,N-1\}$.
Similarly in the right hand side of the boundary we have the following   generator  that  acts on  functions $f:\Omega_N\to\RR$ as
\begin{equation*}
\begin{split}
(L_{N,b}^\beta f)(\eta)&:= c N^{-\theta}r_\beta(\eta)\lrc{f(\eta^{N\!-\!1})-f(\eta)},
\end{split}
\end{equation*}
with \begin{equation}\label{rbeta}
r_\beta(\eta)=\beta (1-\eta(N\!-\!1))+(1-\beta)\eta(N\!-\!1)
\end{equation}
and the configuration $\eta^{N-1}$ is similar to $\eta^1$: $\eta^{N-1}({N-1})=1-\eta({N-1})$ and $\eta^1(y)=\eta(y)$,  for all $y\in \{1,\dots,N-2\}$.

Denote by $\{\eta_t ;\; t\ge 0\}$  the Markov process on $\Omega_N$ associated to the generator $N^2L_N$.  Although $\{\eta_t ;\; t\ge 0\}$ depends on $N$, $ \alpha $, $ \beta $ and $ \theta $, 
these symbols are omitted to keep notation as simple as possible.
To avoid some technicalities,  we consider the trajectories of the Markov process $\eta_t $ with  $t\in [0,T]$, for some  $T>0$. Let $ D_{\Omega_N}[0,T] $ be the path space of
c\`adl\`ag time trajectories with values in $  \Omega_N $, the so called  Skorohod space.
  Given a
measure $\mu_N$ on $\Omega_N$, denote by $\PP_{\mu_N}$ the
probability measure on $ D_{\Omega_N}[0,T]$ induced by the
initial state $\mu_N$ and the Markov process $\eta_t$.
Expectation with respect to $\PP_{\mu_N}$ will be denoted by $\EE_{\mu_N}$.

\subsection{Hydrostatic Limit}\label{sub21}
A straightforward computation shows that, when $ \alpha=\beta$, the Bernoulli product measure with parameter $ \alpha $, defined by the fact that the random variables $ \{\eta(x), x \in I_N\} $ are independent and have distribution Bernoulli($\alpha$), is reversible for the dynamics. However, when $ \alpha \neq \beta $ this is not true.
For the general case what we can say about 
the invariant measures  is that they are associated to a linear profile depending on $\theta$, in the sense of Definition \ref{initialprofile}, as we state in the Theorem \ref{t1}.
\begin{defn}\label{initialprofile}
A sequence of probability measures $ \{\mu_{N} : N \geq 1 \} $ in the space $ \Omega_N $ is \emph{associated to the density profile} $ \gamma:[0,1] \rightarrow [0,1] $ if, for all $ \delta >0 $ and continuous function $ H: [0,1] \rightarrow \RR $ the following holds:
\begin{equation*}
\lim_{N\to\infty}
\mu_{N} \Big[ \eta:\, \Big| \frac {1}{N} \sum_{x = 1}^{N\!-\!1} H(\pfrac{x}{N})\, \eta(x)
- \int_{[0,1]} H(u)\, \gamma(u)\, du \Big| > \delta\Big] \;=\; 0.
\end{equation*}
\end{defn}

\begin{teo}[Hydrostatic Limit]\label{t1}
Let $\mu_N$ be the probability measure in $\Omega_N$ invariant for the Markov process with infinitesimal generator $N^2 L_N$, defined in \eqref{ger}. Then the sequence $\mu_N$ is associated to the profile $\overline{\rho}: [0,1]\to\RR$ given by
\begin{equation}\label{perfis_estacionarios}
\overline{\rho}(u)=
\begin{cases}
(\beta - \alpha)u+\alpha, & \mbox{if }\theta\in [0,1), \\
\frac{c(\beta -\alpha)}{2+c}u+\alpha + \frac{\beta - \alpha}{2+c}, & \mbox{if }\theta = 1, \\
\frac{\beta + \alpha}{2}, & \mbox{if }\theta \in(1,\infty),
\end{cases}
\end{equation}
for all $u\in [0,1]$.
\end{teo}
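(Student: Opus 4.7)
The plan is to reduce the statement to two discrete boundary-value problems: first compute $\rho_N(x):=\EE_{\mu_N}[\eta(x)]$ explicitly, and then control the truncated two-point function to upgrade convergence in mean to convergence in $\mu_N$-probability.

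Applying the generator to the basic observable $\eta(x)$ yields the discrete Laplacian $\eta(x-1)+\eta(x+1)-2\eta(x)$ for $2\le x\le N-2$, together with Robin-type contributions $\eta(2)-\eta(1)+N^{-\theta}(\alpha-\eta(1))$ at $x=1$ and $\eta(N-2)-\eta(N-1)+N^{-\theta}(\beta-\eta(N-1))$ at $x=N-1$. Invariance of $\mu_N$ forces $\EE_{\mu_N}[L_N\eta(x)]=0$ for every $x\in I_N$, so $\rho_N$ is discrete harmonic on $\{2,\dots,N-2\}$, hence affine, $\rho_N(x)=ax+b$. The two boundary identities fix
\[
a \;=\; \frac{\beta-\alpha}{N+2N^{\theta}-2},\qquad b \;=\; \alpha+(N^{\theta}-1)\,a.
\]
Taking $x=\lfloor uN\rfloor$ and sending $N\to\infty$ in each of the regimes $\theta\in[0,1)$, $\theta=1$, $\theta\in(1,\infty)$ reproduces exactly the three profiles in \eqref{perfis_estacionarios} for every $u\in[0,1]$. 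Since the $\rho_N$ are uniformly bounded by $1$, Riemann-sum convergence gives
\[
\EE_{\mu_N}\!\left[\frac{1}{N}\sum_{x=1}^{N-1}H\!\left(\tfrac{x}{N}\right)\eta(x)\right]\;=\;\frac{1}{N}\sum_{x=1}^{N-1}H\!\left(\tfrac{x}{N}\right)\rho_N(x)\;\longrightarrow\;\int_0^1 H(u)\,\overline\rho(u)\,du
\]
for every continuous $H:[0,1]\to\RR$.

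To pass from convergence of expectations to convergence in $\mu_N$-probability (as required by Definition~\ref{initialprofile}), I would bound the variance of the above empirical average via the truncated two-point function $\varphi_N(x,y):=\EE_{\mu_N}[\eta(x)\eta(y)]-\rho_N(x)\rho_N(y)$ for $1\le x<y\le N-1$. The identity $\EE_{\mu_N}[L_N(\eta(x)\eta(y))]=0$, together with the closed form of $\rho_N$ above, turns into an inhomogeneous discrete Poisson equation for $\varphi_N$ on the lattice triangle $\{1\le x<y\le N-1\}$: the source along the diagonal $y=x+1$ has magnitude $a=O(1/N)$ coming from the linear gradient of $\rho_N$, while the lateral boundaries carry Robin-type data of size $N^{-\theta}$. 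A discrete maximum principle on the triangle (equivalently, a Green's-function representation against these data) delivers the uniform bound $|\varphi_N(x,y)|\le C/N$, whence $\mathrm{Var}_{\mu_N}(N^{-1}\sum_x H(x/N)\eta(x))=O(1/N)$ and Chebyshev closes the argument.

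The main obstacle I expect is the uniform two-point estimate. The explicit solution for $\rho_N$, the regime-by-regime pointwise limit and the Riemann-sum step are routine, but proving $|\varphi_N|=O(1/N)$ simultaneously for $\theta<1$, $\theta=1$ and $\theta>1$ requires some care: in the Neumann regime $\theta>1$ the Robin factor $N^{-\theta}$ degenerates, so the control on $\varphi_N$ has to come through the bulk source rather than through the boundary data, and the maximum-principle argument must be set up to accommodate this uniformly in $\theta$.
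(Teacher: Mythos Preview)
Your strategy is the same as the paper's: compute the stationary one-point function explicitly by solving a discrete Robin problem, then control the truncated two-point function $\varphi_N$ through the discrete Poisson equation it satisfies on the triangle, and finish with Chebyshev. The explicit formula you obtain for $\rho_N$ and the regime-by-regime limits coincide with the paper's Lemma~\ref{mean_occupation}.

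One correction: the source term on the diagonal $\{y=x+1\}$ is $a_N^2$, not $a_N$. This is not a cosmetic slip---with a source of order $a_N$ and a Green's function of order $N$ on the diagonal you would only get $|\varphi_N|=O(1)$, which is useless. It is precisely the quadratic $a_N^2$ that makes the bound close; you should redo the computation of $L_N(\eta(x)\eta(y))$ carefully to see the cancellation that produces the square.

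On the covariance estimate itself, the paper does exactly what you suggest---a Green's-function representation---but makes it concrete probabilistically: $\varphi_N(u)=-a_N^2\,T_u^\theta$, where $T_u^\theta$ is the expected time a random walk with the $\theta$-dependent boundary conductances spends on the diagonal before absorption. To handle the degeneration at the boundary when $\theta>0$, the paper builds a coupling that stacks independent copies of the $\theta=0$ walk, restarted each time absorption is refused by a Bernoulli$(N^{-\theta})$ coin; this gives $T_u^\theta\le C(N+N^\theta)$ and hence $|\varphi_N|\le C(N+N^\theta)a_N^2\to 0$ in every regime. Your ``maximum principle set up uniformly in $\theta$'' is pointing at the right object, but you will need some device of this kind to control the Green's function when the boundary conductance degenerates; the barrier you build must absorb the factor $N^\theta$ coming from the slow escape, and it is the extra factor of $a_N$ in the source that compensates for it.
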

In Section \ref{s3} we prove this theorem.
Notice that  the profiles in \eqref{perfis_estacionarios} are precisely the stationary solutions of the hydrodynamic equations \eqref{Dirichlet-intro}, \eqref{Neumann-intro} and \eqref{Robin-intro}, respectively.

\subsection{Hydrodynamic Limit}\label{s3,5}

To state the Hydrodynamic Limit, see Theorem \ref{t2}, we need some notations and definitions, which we present in the following:
We denote by $ \< \cdot,\cdot \> $ the $L^{2}[0,1]$ inner product. When we consider $L^{2}[0,1]$ with respect to a measure $\mu$, we  denote by $ \<\cdot,\cdot \>_{\mu} $ its inner product. For an interval $ \mathcal{I} $ , a time $ T>0 $ and integers $ n $ and $ m $, we denote by $ C^{n,m}\left([0,T] \times \mathcal{I}\right) $ the set of functions defined on $ [0,T] \times \mathcal{I} $ that are $n$ times differentiable on the first variable and $m$ on the second one. An index on a function will always denote a variable, not a derivative. For example, $ H_{s}(u) $ means $ H(s,u) $. The derivative of $H \in C^{1,2}\left([0,T] \times \mathcal{I}\right)$ will be denoted by $\partial_s H$ (first variable) and $\partial_u H$ (second variable). We shall write $\Delta H$ for $\partial_{u}^2 H$.
We also have to consider the subset $C^{1,2}_0([0, T] \times [0,1]) $ of functions $H \in C^{1,2}([0,T] \times [0,1])$ such that $H_t(0)=0=H_t(1)$, for all $t\in [0,T]$. Finally, we denote by $C^{\infty}_{\textrm{k}}((0,1))$ the set of all smooth functions defined in  $(0,1)$ with compact support.

\begin{defn}

Let  $\HH^1(0,1)$ be the set of all locally integrable functions $g: (0,1)\to\RR$ such that
there exists a function $\partial_ug\in L^2(0,1)$ satisfying
\begin{equation*}
 \<\partial_u\varphi,g\>\,=\,-\<\varphi,\partial_ug\>,
\end{equation*}
for all $\varphi\in C^{\infty}_{\textrm{k}}((0,1))$.
For $g\in\HH^1(0,1)$,  define the norm
\begin{equation*}
 \Vert g\Vert_{\mc H^1(0,1)}\,:=\, \Big(\Vert g\Vert_{L^2(0,1)}^2+\Vert\partial_ug\Vert_{L^2(0,1)}^2\Big)^{1/2}.
\end{equation*}
\end{defn}

\begin{defn}
 The space $L^{2}(0,T;\HH^{1}(0,1))$ is the set of measurable functions $ f:[0,T] \to \HH^{1}(0,1) $ such that
\begin{displaymath}
\int_{0}^{T}||f_{t}||^{2}_{\HH^{1}(0,1)}\,dt < +\infty.
\end{displaymath}
\end{defn}

\begin{defn}[Hydrodynamic equation for $\theta\in[0,1)$]
Let $\gamma: [0,1]\to\RR$ be a measurable function. We say that a bounded function $ \rho : [0,T] \times [0,1] \rightarrow \RR $ is a weak solution of the heat equation with Dirichlet boundary conditions
\begin{equation}\label{dirichlet}
\left\{
\begin{array}{ll}
\partial_{t} \rho_t(u)  =  \Delta \rho_t(u),  &u\in(0,1),\, \,\,t\geq 0,  \\
 \rho_{t} (0)  =  \alpha, \,\, \rho_{t} (1)  = \beta,  & t \geq 0,\\
 \rho_0(u)  =  \gamma(u), & u\in[0,1],\\
\end{array}
\right . 
\end{equation}
if $\rho\in L^2(0,T;\HH^1(0,1))$ and  $ \rho $ satisfies
\begin{equation}\label{solucao_fraca_dirichlet}
\begin{split}
\< \rho_{t}, H_{t}\>-\< \gamma , H_{0}\> & = \int_{0}^{t} \< \rho_{s} , (\partial_{s}+\Delta)  H_{s} \> \,ds - \int_{0}^{t}\big\{\beta\partial_{u}H_{s}(1)-\alpha\partial_{u}H_{s}(0)\big\}\,ds,
\end{split}
\end{equation}
for all $ t \in {[0,T]} $ and $ H \in C^{1,2}_0([0, T] \times [0,1]) $.
\end{defn}

\begin{defn}[Hydrodynamic equation for $\theta=1$]
Let $\gamma: [0,1]\to\RR$ be a measurable function. We say that a bounded function $ \rho : [0,T] \times [0,1] \rightarrow \RR $ is a weak solution of the heat equation with Robin boundary conditions

\begin{equation}\label{robin}
\left\{
\begin{array}{ll}
\partial_{t} \rho (u)  =  \Delta \rho_t(u),  &u\in(0,1),\, \,\,t \geq 0,   \\
\partial_{u} \rho_{t} (0)  = c(\rho_{t} (0)-\alpha), & \, t\geq 0,\\
 \partial_{u} \rho_{t} (1) = c(\beta-\rho_{t} (1)), & \, t \geq 0,\\
\rho_0(u)  =  \gamma(u),  & u\in[0,1],  \\
\end{array}
\right .
\end{equation}
if $\rho\in L^2(0,T;\HH^1(0,1))$ and $ \rho$ satisfies
\begin{equation}\label{solucao_fraca_robin}
\begin{split}
\< \rho_{t}, H_{t}\>-\< \gamma , H_{0}\> & = \int_{0}^{t} \< \rho_{s} , (\partial_{s}+\Delta)  H_{s} \> \,ds\\  
& + \int_{0}^{t}\big\{\rho_s(0)\partial_{u}H_{s}(0)-\rho_{s}(1)\partial_{u}H_{s}(1)\big\}\,ds \\
& + c\int_{0}^{t}\Big\{H_{s}(0)\big(\alpha-\rho_{s}(0)\big) + H_{s}(1)\big(\beta-\rho_{s}(1)\big)\Big\}\,ds,
\end{split}
\end{equation}
for all $ t \in {[0,T]} $ and $ H \in C^{1,2}([0, T] \times [0,1]) $.
\end{defn}

\begin{defn}[Hydrodynamic equation for $\theta>1$]
Let $\gamma: [0,1]\to\RR$ be a measurable function. We say that a bounded function $ \rho : [0,T] \times [0,1] \rightarrow \RR $ is a weak solution of the heat equation with Neumann boundary conditions

\begin{equation}\label{neumann}
\left\{
\begin{array}{ll}
\partial_{t} \rho(u)  =  \Delta \rho_t(u) , & u\in(0,1),\, \,\,t \geq 0,  \\
\partial_{u} \rho_{t} (0) =0, \,\,\partial_{u} \rho_{t} (1)  =  0, &  t \geq 0,\\
\rho_0(u)  =  \gamma(u) ,& u\in[0,1],  \\
\end{array}
\right .
\end{equation}
if $\rho\in L^2(0,T;\HH^1(0,1))$ and $ \rho $ satisfies
\begin{equation}\label{solucao_fraca_neumann}
\begin{split}
\< \rho_{t}, H_{t}\>-\< \gamma , H_{0}\> & = \int_{0}^{t} \< \rho_{s} , (\partial_{s}+\Delta)  H_{s} \> \,ds\\  
& - \int_{0}^{t}\big\{\rho_s(1)\partial_{u}H_{s}(1)-\rho_{s}(0)\partial_{u}H_{s}(0)\big\}\,ds,
\end{split}
\end{equation}
for all $ t \in {[0,T]} $ and $ H \in C^{1,2}([0, T] \times [0,1]) $.
\end{defn}

\begin{nota}
As usual, to have an idea as we obtain the weak formulation of the heat equation with the boundary conditions  showed in 
one should multiply both sides of $\partial_{t} \rho(u)  =  \Delta \rho_t(u) $  by a test function $H$, then integrate both in space and time and finally, perform twice a formal integration by parts in space and one in time. Then, applying the  respective boundary conditions we are
lead to the corresponding integral equation. This  shows  that any strong solution is a weak solution of the respective equation.
\end{nota}
  Now we are able to enunciate our second statement:

\begin{teo}[Hydrodynamic Limit]\label{t2}
Let $ \gamma : [0,1] \rightarrow [0,1 ]$ be a measurable function and $ \{\mu_{N}: N \geq 1 \} $ a sequence of probability measures in $\Omega_N$ associated to $ \gamma $, in the sense of Definition \ref{initialprofile}. For each $ t \in [0,T] $, $ \delta >0 $ and for all functions $ H \in C[0,1] $, we have that
\begin{equation*}
\lim_{ N \rightarrow +\infty }
\PP_{\mu_{N}} \Big[\eta_{\cdot}: \Big\vert \frac{1}{N} \sum_{x \in I_{N}}
\eta_{t}(x) \,H(\pfrac{x}{N})\, - \int_{[0,1]} H(u)\, \rho(t,u)\, du\, \Big\vert
> \delta \Big] = 0,
\end{equation*}
holds, where  $\rho(t,\cdot)$ is the unique weak solution of the equation \eqref{dirichlet} if $\theta\in[0,1)$, or \eqref{robin} if $\theta = 1$, or \eqref{neumann} if $\theta\in(1,\infty)$.
\end{teo}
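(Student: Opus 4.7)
The plan is to follow the standard entropy-method strategy for hydrodynamic limits, organized around the empirical measure $\pi^N_t(du) = \frac{1}{N}\sum_{x \in I_N}\eta^N_t(x)\delta_{x/N}(du)$ and its pairings $\<\pi^N_t, H_t\>$ against suitable test functions $H$. For such $H$, Dynkin's formula produces a martingale
\begin{equation*}
M^{N,H}_t \;=\; \<\pi^N_t, H_t\> - \<\pi^N_0, H_0\> - \int_0^t (\partial_s + N^2 L_N)\<\pi^N_s, H_s\>\, ds
\end{equation*}
whose quadratic variation I expect to be $O(1/N)$, so that $M^{N,H}_t \to 0$ in $L^2(\PP_{\mu_N})$. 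A discrete summation by parts on $N^2 L_{N,0}\<\pi^N_s, H_s\>$ yields the bulk piece $\<\pi^N_s, \Delta_N H_s\>$ (converging to $\<\rho_s,\Delta H_s\>$) together with discrete boundary derivatives of $H$ of order $N$ at the sites $1$ and $N-1$, while $N^2 L_{N,b}\<\pi^N_s, H_s\>$ contributes reaction terms at those sites carrying a prefactor $N^{2-\theta}\cdot N^{-1} = N^{1-\theta}$. The sign of $1-\theta$ is precisely what selects the three boundary conditions.

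After obtaining tightness (Section \ref{s4}) and the Replacement Lemma (Section \ref{s5}), which lets me swap $\eta^N_s(x)$ for its local mean $\eta^{\varepsilon N}_s(x) = \frac{1}{\varepsilon N}\sum_{y=x}^{x+\varepsilon N-1}\eta^N_s(y)$ in integrated expressions and identify these means with $\rho_s(u)$ in the limit, I would split into cases. For $\theta \in [0,1)$ I would restrict to $H \in C^{1,2}_0$; then the prefactor $N^{1-\theta}$ at the boundary forces the boundary sites to relax to the reservoir densities, which is exactly the content of the boundary Replacement Lemma stated with reference to a Bernoulli product measure with slowly varying profile interpolating between $\alpha$ and $\beta$, giving the Dirichlet term $-\int_0^t\{\beta \partial_u H_s(1) - \alpha \partial_u H_s(0)\}\,ds$ in \eqref{solucao_fraca_dirichlet}. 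For $\theta = 1$ the factor $N^{1-\theta}=1$ keeps both the discrete gradients and the reaction terms alive at the same order, producing the four boundary integrals of \eqref{solucao_fraca_robin} after another replacement identifying $\eta^N_s(1)$ and $\eta^N_s(N-1)$ with $\rho_s(0)$ and $\rho_s(1)$ respectively. For $\theta > 1$ the reaction contribution vanishes and only the discrete gradients survive, yielding \eqref{solucao_fraca_neumann}.

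The energy estimates from Section \ref{s5} then show that any limit point of $\{Q^N\}$ is concentrated on trajectories $\rho \in L^2(0,T;\HH^1(0,1))$, so that the candidate integral identity is meaningful. Together with the initial condition, which follows from the association of $\mu_N$ with $\gamma$, this identifies any limit point as a weak solution of the corresponding PDE. The uniqueness result of Section \ref{s7} then collapses the set of limit points to a single Dirac mass on the unique $\rho$, which upgrades convergence in distribution to convergence in probability for the deterministic limit, giving the statement of Theorem \ref{t2}.

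The principal obstacle will be the boundary Replacement Lemma in the regime $\theta \in [0,1)$: since the Bernoulli product measure is invariant only when $\alpha=\beta$, the entropy argument must be carried out with respect to a Bernoulli reference with slowly varying parameter in the spirit of \cite{f}, and the entropy production one needs to control at the slow boundary is itself of order $N^{-\theta}$ rather than $O(1)$. Balancing the cost of switching from the true invariant measure of \cite{d} to the product reference against this diminished Dirichlet form, while retaining enough dissipation to close the one-block and two-block estimates uniformly near the boundary, is the delicate point, and is precisely what forces the threshold $\theta < 1$ in the Dirichlet regime.
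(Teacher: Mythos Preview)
Your proposal is correct and follows essentially the same approach as the paper: tightness via the martingale decomposition and quadratic variation bounds, characterization of limit points through summation by parts and the case split on the sign of $1-\theta$, the two flavors of Replacement Lemma (direct replacement of $\eta_s(1)$ by $\alpha$ for $\theta\in[0,1)$ via the slowly varying Bernoulli reference, and replacement by the local box average for $\theta\ge 1$), energy estimates to place limits in $L^2(0,T;\HH^1(0,1))$, and uniqueness to conclude. You have also correctly located the principal difficulty, namely that for $\theta\in[0,1)$ the boundary Dirichlet form is only of order $N^{-\theta}$ and one must compensate by working with the product measure $\nu^N_{\gamma(\cdot)}$ and conditioning on the boundary occupation, exactly as the paper does in Lemmas~\ref{lema42}--\ref{replace1}.
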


  In order to prove the theorem above, we begin by defining the empirical measure associated to a configuration $ \eta $ as
\begin{displaymath}
\pi^N(\eta,du)=\frac{1}{N} \sum_{x \in I_{N}}\eta(x)\delta_{\frac{x}{N}}(du),
\end{displaymath}
where $ \delta_{u} $ denotes the Dirac mass at $ u $. Thus, given a Markov process in $ D_{\Omega_N}[0,T] $, we can consider the empirical process $ \pi^N_{.}(du)=\pi^N(\eta_{.},du) $ in $ D_{\MM}[0,T] $, where $ \MM $ is the set of positive measures on $ [0,1] $  with total mass bounded by 1 endowed with the weak convergence topology. Observe that $ \pi_{.}^{N} $ is also a Markov process.
For an integrable function $ H: [0,1] \rightarrow \RR $, $\< \pi^{N}_{t}, H \>$ denotes the integral of $ H $ with respect to the measure $ \pi^{N}_{t} $:
\begin{equation*}
\< \pi^{N}_{t}, H \> = \frac{1}{N} \sum_{x \in  I_{N}} \eta_{t}(x)\,H (\pfrac{x}{N})
.
\end{equation*}
When $ \pi_{t} $ has a density $ \rho_t $ ($ \pi(t,du) = \rho(t,u) du $), we will write $ \< \rho_{t}, H \> $ instead of $ \< \pi_{t}, H \> $.

  Fix $ T>0 $ and a value of $ \theta\geq 0 $. Given a probability measure $ \mu_{N} $ on $ \Omega_N $, consider the Markov process $ \pi_{.}^{N} $ in $ D_{\MM}[0,T] $ associated to the process $ \eta_{.}^{N} $ in $ D_{\Omega_N}[0,T] $, which has $ \mu_{N} $ as initial distribution. Denote by $ \QQ_{N} $ the probability measure on $ D_{\MM}[0,T] $ induced by $ \pi_{.}^{N} $.
 Fix an initial continuous profile $ \gamma : [0,1] \rightarrow [0,1] $ and consider a sequence $ \{ \mu_{N} : N \geq 1 \} $ of measures on $ \Omega_N $ associated to $ \gamma $. Let $ \QQ_{*} $ be the probability measure on $ D_{\MM}[0,T] $ which gives mass 1 to the path $ \pi(t,du) = \rho (t,u)du $, where $ \rho(t,\cdot) $ is the weak solution of the corresponding PDE (keep in mind that the PDE depends on the value of $\theta$).

Now, we are in position to state our main result, a slight generalization of Theorem \ref{t2}.

\begin{prop}\label{s15}
The sequence of probabilities $ \{ \QQ_{N}\}_{N \in \NN} $ converges weakly to $ \QQ_{*} $, when $ N \rightarrow +\infty $.
\end{prop}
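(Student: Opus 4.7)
The plan follows the standard entropy-method strategy for hydrodynamic limits of boundary-driven exclusion processes: tightness of the empirical-measure processes, identification of subsequential limits as concentrated on weak solutions of the hydrodynamic PDE, and uniqueness of those solutions. Since the relevant PDE depends on $\theta$ (the unique weak solution of \eqref{dirichlet}, \eqref{robin} or \eqref{neumann}), I would treat all three regimes in parallel, with only the Replacement Lemma and the final identification of boundary contributions depending on the regime.

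First I would prove tightness of $\{\QQ_N\}$ in $D_{\MM}[0,T]$ (Section \ref{s4}). Since $\MM$ is compact for the weak topology, it suffices, by the Aldous--Rebolledo criterion, to check tightness of the one-dimensional projections $t \mapsto \<\pi^N_t, H\>$ for a dense collection of $H \in C^{\infty}([0,1])$. I would apply Dynkin's formula, bound the drift part by the supremum norm of $\Delta H$ and of $H$ on the boundary, and estimate the quadratic variation of the martingale part as $O(1/N)$, uniformly in $\theta \geq 0$.

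Next I would characterize the limit points. For any $H \in C^{1,2}([0,T]\times[0,1])$, define the Dynkin martingale
\begin{equation*}
M^N_t(H) \;=\; \<\pi^N_t, H_t\> \,-\, \<\pi^N_0, H_0\> \,-\, \int_0^t (\partial_s + N^2 L_N)\<\pi^N_s, H_s\>\,ds.
\end{equation*}
A discrete summation by parts applied to $N^2 L_{N,0}\<\pi^N_s, H_s\>$ produces $\<\pi^N_s, \Delta_N H_s\>$ plus residual bulk-boundary terms of order one involving $\eta_s(1)$ times a discrete approximation of $\partial_u H_s(0)$ and $\eta_s(N\!-\!1)$ times a discrete approximation of $\partial_u H_s(1)$; the boundary generator $L_{N,b}$ contributes a factor $N^{1-\theta}$ times $H_s(1/N)(\alpha - \eta_s(1)) + H_s((N\!-\!1)/N)(\beta - \eta_s(N\!-\!1))$. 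Passing to a subsequential limit $\QQ_{N_k} \to \QQ^\ast$ (available by tightness) and using $\EE[M^N_t(H)^2] \to 0$, one obtains an integral identity for $\rho$; the regime now dictates the handling of the $N^{1-\theta}$ factor. For $\theta \in [0,1)$, I would test against $H \in C^{1,2}_0$ to kill the diverging prefactor, and then use the Replacement Lemma of Section \ref{s5} to replace $\eta_s(1)$ by $\alpha$ and $\eta_s(N\!-\!1)$ by $\beta$ in the surviving bulk-boundary derivative terms, recovering the Dirichlet identity \eqref{solucao_fraca_dirichlet}. For $\theta = 1$ the factor equals one; a local-averaging Replacement Lemma converts $\eta_s(1)$ and $\eta_s(N\!-\!1)$ into $\rho_s(0)$ and $\rho_s(1)$, yielding exactly \eqref{solucao_fraca_robin}. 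For $\theta \in (1,\infty)$ the $L_{N,b}$-contribution vanishes in the limit and only the bulk-boundary derivative terms survive, delivering \eqref{solucao_fraca_neumann}. An Energy Estimate (also Section \ref{s5}) then guarantees $\rho \in L^2(0,T;\HH^1(0,1))$. Uniqueness of weak solutions, proved in Section \ref{s7}, forces every subsequential limit to coincide with $\QQ_\ast$, so that $\QQ_N \to \QQ_\ast$.

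The main obstacle is the Replacement Lemma in the subcritical regime $\theta \in [0,1)$. Because the Bernoulli product is not invariant when $\alpha \neq \beta$, the standard entropy-against-the-invariant-measure comparison is unavailable. Following the strategy of \cite{f}, I would use as reference measure a Bernoulli product with a slowly varying parameter smoothly interpolating between $\alpha$ and $\beta$; with this choice the relative entropy of $\mu_N$ is $O(N)$, while the associated Dirichlet form estimate must still absorb the $N^{1-\theta}$ prefactor carried by the boundary terms. The delicate points are the selection of the interpolating profile and a sharp Dirichlet-form bound that exploits the smallness of the boundary rates; this same technology drives the Energy Estimate giving $\rho \in L^2(0,T;\HH^1(0,1))$, and is really the crux of the whole argument.
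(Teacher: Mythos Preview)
Your proposal is correct and follows essentially the same three-step scheme as the paper: tightness (Section~\ref{s4}), characterization of limit points via the Dynkin martingale together with the Replacement Lemmas and Energy Estimates of Section~\ref{s5} (carried out in Section~\ref{s6}), and uniqueness of weak solutions (Section~\ref{s7}). Your identification of the subcritical Replacement Lemma as the crux, and of the slowly-varying Bernoulli product measure as the right reference measure, matches the paper exactly. One small refinement: your tightness sketch claims a drift bound ``uniformly in $\theta\geq 0$'', but for $\theta\in[0,1)$ the boundary drift carries the diverging factor $N^{1-\theta}$, so (as the paper does, and as you do later in the characterization step) you must restrict the dense family of test functions to $C^2_0[0,1]$ in that regime.
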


Observe that  Theorem \ref{t2} is a corollary of the proposition above.
  The proof of this proposition is divided into three steps. In  Section \ref{s4}, we prove that the sequence  $ \{ \QQ_{N}\}_{N \in \NN} $ is tight. In Section \ref{s6}, we prove that the limit points are absolutely continuous and the (Lebesgue) densities $ \rho_{t} $ are weak solutions of the corresponding hydrodynamic equation. In Section \ref{s7}, we prove uniqueness of weak solutions for each PDE.

\section{Proof of Theorem \ref{t1}}\label{s3}

In this section, we prove Theorem \ref{t1}. The proof is based on the approach of \cite{lmo}. The strategy is to compare $\eta(x)$ with its mean
\begin{equation}\label{mean}
\rho^N(x):=\EE_{\mu_N}[\eta(x)]
\end{equation} 
and $\rho^N$ with $\bar{\rho}$, defined in \eqref{perfis_estacionarios}.
The comparison between $\rho^{N}$ and $\bar{\rho}$ is stated as Lemma \ref{mean_occupation}. To compare the two remaining quantities, an important step is to control the decay of the two point correlation function
\begin{equation}\label{cov}
\varphi^N(x,y):=\EE_{\mu_N}[\eta(x)\eta(y)]-\EE_{\mu_N}[\eta(x)]\EE_{\mu_N}[\eta(y)].
\end{equation}
This is the content of Lemma \ref{covariancias}. The idea in the proof of this lemma is to interpret the quantities $\varphi^N(x,y)$ by means of a continuous time random walk. Once it is done, a coupling of random walks will provide the bounds on these quantities.
Now we state the two lemmas. Their proofs are postponed to the end of this section.
The first lemma computes explicitly the mean function $\rho^{N}$.
\begin{lema}\label{mean_occupation}
	The mean $\rho^N(x)$ is given by
	\begin{equation}\label{eq_mean_occupation}
	\rho^N(x)=a_Nx+b_N,\mbox{ for all  } x\in I_N,
	\end{equation}
	where 
	\begin{equation}\label{linear_coef}
	a_N=\frac{c(\beta -\alpha)}{2N^\theta+c(N\!-\!2)}
	\quad\quad\mbox{and}\quad\quad
	b_N=\alpha + a_N\left(\frac{N^\theta}{c} -1\right).
	\end{equation}
	
	In particular,
	\begin{equation}\label{eq:mean_limit}
	\lim_{N\to\infty}\Big(\max_{x\in I_N}\big|\rho^N(x)-\overline{\rho}(\pfrac{x}{N})\big|\Big)=0.
	\end{equation}
\end{lema}
The second lemma gives the asymptotic behavior of the two point correlation function.
\begin{lema}\label{covariancias}
	The two point correlation function $ \varphi^N(x,y)$ satisfies
	\begin{equation}\label{cov_3}
	\max_{0< x<y< N}\big|\varphi^N(x,y)\big|\leq\frac{C}{N^\theta+N},
	\end{equation}
	for some positive constant $C>0$.
\end{lema}
Now we show how to combine these lemmas to prove Theorem \ref{t1}.
\begin{proof}[Proof of Theorem \ref{t1}]
	Begin by Markov's inequality and triangular inequality to obtain
	\begin{equation*}
	\begin{split}
	& \mu_{N} \Big[ \eta:\, \Big| \pfrac {1}{N} \sum_{x = 1}^{N\!-\!1} H(\pfrac{x}{N})\, \eta(x)
	- \int_{[0,1]} H(u)\, \overline{\rho}(u) du \Big| > \delta\Big] \\
	\leq &\, \,\delta^{-1}\,\EE_{\mu_N}\left[\Big|
	\frac{1}{N}\sum_{x=1}^{N\!-\!1}H(\pfrac{x}{N})\left(
	\eta(x)-\rho^N(x)
	\right)\Big|
	\right]\\
	& + \delta^{-1}\Big|
	\frac{1}{N}\sum_{x=1}^{N\!-\!1}H(\pfrac{x}{N})\rho^N(x)-\int_0^1 H(u)\overline{\rho}(u)\,du\Big|.
	\end{split}
	\end{equation*}
Equation \eqref{eq:mean_limit} implies that the second term above converges to zero as $N\to\infty$ (a way to prove this is to approximate the integral by a Riemann sum).  For the first term, use Cauchy-Schwarz inequality and the fact that $|\eta(x)-\rho^N(x)|\leq 1$ to bound
	\begin{equation*}
	\begin{split}
	\EE_{\mu_N}\left[\Big|
	\frac{1}{N}\sum_{x=1}^{N\!-\!1}H(\pfrac{x}{N})\left(
	\eta(x)-\rho^N(x)
	\right)\Big|
	\right] & \leq \EE_{\mu_N}\left[\left(
	\frac{1}{N}\sum_{x=1}^{N\!-\!1}H(\pfrac{x}{N})\left(
	\eta(x)-\rho^N(x)
	\right)\right)^2
	\right]^{1/2}\\
	& \leq \norm{H}_{\infty}\left(
	\frac{1}{N} + 2\max_{1\leq x<y\leq N\!-\!1}|\varphi^N(x,y)|
	\right)^{1/2},
	\end{split}
	\end{equation*}
	which converges to zero by Lemma \ref{covariancias}.
\end{proof}

It remains to prove Lemmas \ref{mean_occupation} and \ref{covariancias}. We begin by the proof of Lemma \ref{mean_occupation}.

\begin{proof}[Proof of Lemma \ref{mean_occupation}] Fix $x\in I_N$ and let $f_x: \Omega_N\to\RR$ given by $f_x(\eta) = \eta(x)$. 
	Applying the infinitesimal generator  on the function $f_x$ we get
	\begin{equation*}
	L_{N}f_x(\eta)=
	\begin{cases}
	\big[\eta(x+1)-\eta(x)\big]+\big[\eta(x-1)-\eta(x)], & \mbox{if }x\in\{2,\dots,N\!-\!2\}, \\
	\big[\eta(2)-\eta(1)\big]+cN^{-\theta}\big[\alpha -\eta(1)\big], & \mbox{if } x=1, \\
	cN^{-\theta}\big[\beta -\eta(N\!-\!1)\big]+\big[\eta(N\!-\!2)-\eta(N\!-\!1)\big], & \mbox{if } x=N\!-\!1. \\
	\end{cases}
	\end{equation*}
	Since $\mu_N$ is an invariant measure, $\EE_{\mu_N}[L_N f_x]=0$. Combining this fact with the system of equations above it is easy to conclude that $\rho^N(x)$ satisfy the following recurrence relations:
	\begin{equation*}
	\begin{cases}
	0 =[\rho^N(x+1)-\rho^N(x)]+[\rho^N(x-1)-\rho^N(x)],\quad \quad\mbox{if } \;\;\;x\in\{2,\dots,N\!-\!2\},\\
	0 =[\rho^N(2)-\rho^N(1)]+cN^{-\theta}[\alpha - \rho^N(1)], \\
	0 =cN^{-\theta}[\beta - \rho^N(N\!-\!1)]+[\rho^N(N\!-\!2)-\rho^N(N\!-\!1)].
	\end{cases}
	\end{equation*}
	The first equation above implies that $ \rho_N $ is a linear function. A direct verification shows that \eqref{eq_mean_occupation} and \eqref{linear_coef}  solve the recurrence, concluding the proof.
\end{proof}

We now turn to the proof of Lemma \ref{covariancias}. The strategy of the proof is the same as in the last lemma. The difficulty is that the system of equations we obtain for the functions $\varphi^{N}$ is not easily solved. A comparison with continuous time random walks allows us to get the desired bounds.

\begin{proof}[Proof of Lemma \ref{covariancias}]
	Let $1\leq x< y\leq N\!-\!1$ and $f_{xy}:\Omega_N\to\RR$ defined by 
	\begin{equation*}
	f_{xy}(\eta):=[\eta(x)-\rho^N(x)][\eta(y)-\rho^N(y)].
	\end{equation*} 
	Compute the generator $L_N$ applied on the function $f_{xy}$:
	\begin{equation}\label{cov_1}
	\begin{split}
	L_{N,0}f_{xy}(\eta) & =[\eta(x-1)-\rho^N(x)][\eta(y)-\rho^N(y)]\Ind{x>1}\\
	& + [\eta(x+1)-\rho^N(x)][\eta(y)-\rho^N(y)]\Ind{y\neq x+1}\\
	& + [\eta(x)-\rho^N(x)][\eta(y-1)-\rho^N(y)]\Ind{y\neq x+1}\\
	& + [\eta(x+1)-\rho^N(x)][\eta(x)-\rho^N(x+1)]\Ind{y= x+1}\\
	& + [\eta(x)-\rho^N(x)][\eta(y+1)-\rho^N(y)]\Ind{y< N\!-\!1}\\
	& - [\eta(x)-\rho^N(x)][\eta(y)-\rho^N(y)][2+\Ind{x>1}+\Ind{y< N\!-\!1}],\\
	\end{split}
	\end{equation}
	
	\begin{equation}\label{cov_2}
	\begin{split}
	L_{N,b}^\alpha f_{xy}(\eta)  =  &cN^{-\theta}[\alpha(1-\eta(1))+(1-\alpha)\eta(1)]\cdot[1-2\eta(1)]\cdot[\eta(y)-\rho^N(y)]\Ind{x=1}\quad\mbox{and}\\
	L_{N,b}^\beta f_{xy}(\eta) =&cN^{-\theta}[\beta(1-\eta(N\!-\!1))+(1-\beta)\eta(N\!-\!1)]\cdot[1-2\eta(N\!-\!1)][\eta(x)-\rho^N(x)]\Ind{y=N\!-\!1}.
	\end{split}
	\end{equation}
	Since the measure $\mu_N$ is invariant, we have $\EE_{\mu_N}[L_N f_{xy}]=0$. Combining this formula with equations \eqref{cov_1} and \eqref{cov_2}, we get a linear system of equations relating the covariances $\varphi^N(x,y)$. This linear system can be written in matrix form as $ A\varphi^{N} (x,y)=b(x,y) $ for adequate choices of the matrix $ A $ and the vector $ b $. The matrix $ A $ can be identified  as the generator of a random walk, constructed in the following.

Consider the triangle
\begin{equation*}
V:=\{
(x,y)\in\ZZ^2: 0\leq x <y\leq N
\},
\end{equation*}
and its boundary
\begin{equation*}
\partial V:=\{
(x,y)\in V: x=0 \text{ or } y=N
\}.
\end{equation*}
For $u,v\in V$, define the \emph{conductances}
\begin{equation*}
c_{\theta}(u,v)=
\begin{cases}
1 & \text{ if }  |u-v|=1, u\notin \partial V, v\notin \partial V,\\
cN^{-\theta} & \text{ if }|u-v|=1, u\notin \partial V, v\in\partial V, \\
0 & \text{ otherwise.}
\end{cases}
\end{equation*}
In the Figure 2 we have some copies of $V$, in each one the dashed edges corresponding to the ones where the conductances are equal to $N^{-\theta} $.
For $f: V\to\RR$, define
\begin{equation}\label{antheta}
A^{\theta}_Nf(u)=\sum_{v\in V}c_\theta(u,v)[f(v)-f(u)].
\end{equation}

	Notice that $\varphi^{N}$ is defined in $V \setminus \partial V$. If this function is extended to $V$ by declaring it to vanish in $\partial V$, then
	\begin{equation}\label{greenfunction}
	A_N^\theta \varphi^N(x,y) = a_N^2\Ind{y=x+1},
	\end{equation}
	with $a_N$ given by \eqref{linear_coef}.
	In the case $\theta = 0$, it is known that equation \eqref{greenfunction} has an unique solution given by
	\begin{equation}\label{thetazero}
	\varphi^N(x,y) =- \frac{a_N^2}{N\!-\!1}x(N-y),
	\end{equation}
	and \eqref{cov_3} easily follows. Then,	
	it remains to estimate the covariances when $ \theta >0 $. The strategy is to use the representation \eqref{greenfunction} and prove that to get an estimate for $ \varphi^N(x,y) $, one needs to obtain estimates on some occupation times of the random walk with generator $ A^{\theta}_N $, denoted by $ X^\theta $. We will define a coupling between $ X^\theta $ and $ X^0 $, compare the occupation times for these chains and use equation \eqref{thetazero} to conclude.

	Let $\{X^{\theta}_u(s),\,s\geq 0\}$ be the random walk with infinitesimal generator $A_N^\theta$ starting from $u=(x,y)\in V$. Notice that $X^0_u$ is a simple symmetric random walk on $V$, absorbed in $\partial V$. Denote the diagonal of $V$ by $D:=\{(x,x+1)\in V\}$. Then
	\begin{equation*}
	\varphi^N(X^{\theta}_u(t)) - \varphi^N(x,y) - a_N^2\int_0^t\Ind{X^{\theta}_u(s)\in D}\,ds
	\end{equation*}
	is a martingale with respect to the natural filtration $\mathcal{F}_t :=\sigma \{X^{\theta}_u(s): 0\leq s\leq t \}$. Taking expectations in the expression above, we get
	\begin{equation*}
	\varphi^N(x,y)=\EE_u[\varphi^N(X^{\theta}_u(t))]-a^2_N\cdot\EE_u\left[\int_0^t \Ind{X^{\theta}_u(s) \in D}\,ds\right].
	\end{equation*}
	Since the random walk will almost surely be absorbed at $\partial V$, where $\varphi^N$ vanishes, taking the limit $t\to\infty$ in the equation above we obtain 
		\begin{equation}\label{occupationtime}
	\varphi^N(x,y) = -a^2_N \cdot T^{\theta}_u,
	\end{equation}
		where $T^{\theta}_u:=\EE\left[\int_0^\infty \Ind{X^{\theta}_u(s) \in D}\,ds\right]$ and $u=(x,y)\in V$.
Thus, we need to estimate $T^{\theta}_u$, the total time spent by the random walk 
	$\{ X^{\theta}_u(s);\,s\geq 0\}$ on the diagonal $D$. For 
	$\theta=0$, 
using  equations \eqref{thetazero} and \eqref{occupationtime}, we have
	\begin{equation}\label{t0u}
	T^0_u = \frac{x(N-y)}{N\!-\!1}.
	\end{equation}
	We will compare $T^{0}_{u}$ and $T^{\theta}_{u}$ using a coupling.
	
Let us give an informal description of the coupling. We will construct a new process taking values in $V\times \NN$ such that its projection in $V$ has the same law of the process $X^\theta_u$. Write $u_1=u \in V$. The construction starts with a realization of $X^0_{u_1}$ in $V\times\{1\}$. When the walker tries to jump to the absorbing set $\partial V$, flip an independent coin with probability $cN^{-\theta}$ of heads. If it comes up heads, the walker jumps to $\partial V$ and is absorbed.
Otherwise, if the process is at the point $(u_2,1)$, the walker jumps to  $(u_2,2)$ and starts as an independent copy of $X^0_{u_2}$ in $V\times\{2\}$. When it tries to jump to $\partial V$ again, flip another independent coin and repeat the procedure until the random walk gets absorbed, see Figure 2.

Now, let us construct the coupling rigorously and use it to obtain bounds on $T^{\theta}_u$. Define a new Markov process $\{Z(s),\, s\geq 0\}$ with state space $ V \times \NN $ starting at the point $(u_1,1) \in V \times \NN$.
	Let $\{Y_{n}\}_{n \in \NN}$ be a sequence of independent and identically distributed Bernoulli($N^{-\theta}$) random variables and $Y=\inf\{k : Y_k=1\}$. Set, inductively $ \tau_n=\inf\{s: (X^0_{u_{n}} (s),n)\in \partial V \times \{n\} \} $, with $u_{n}$ defined by $(u_{n},n-1)=(X^{0}_{u_{n-1}}(\tau_{n-1}^{-}), n-1)$, and $\xi_{n}=\sum_{k=1}^{n}\tau_{k}$.
	
	\begin{figure}
	\begin{center}
\includegraphics[width=10cm]{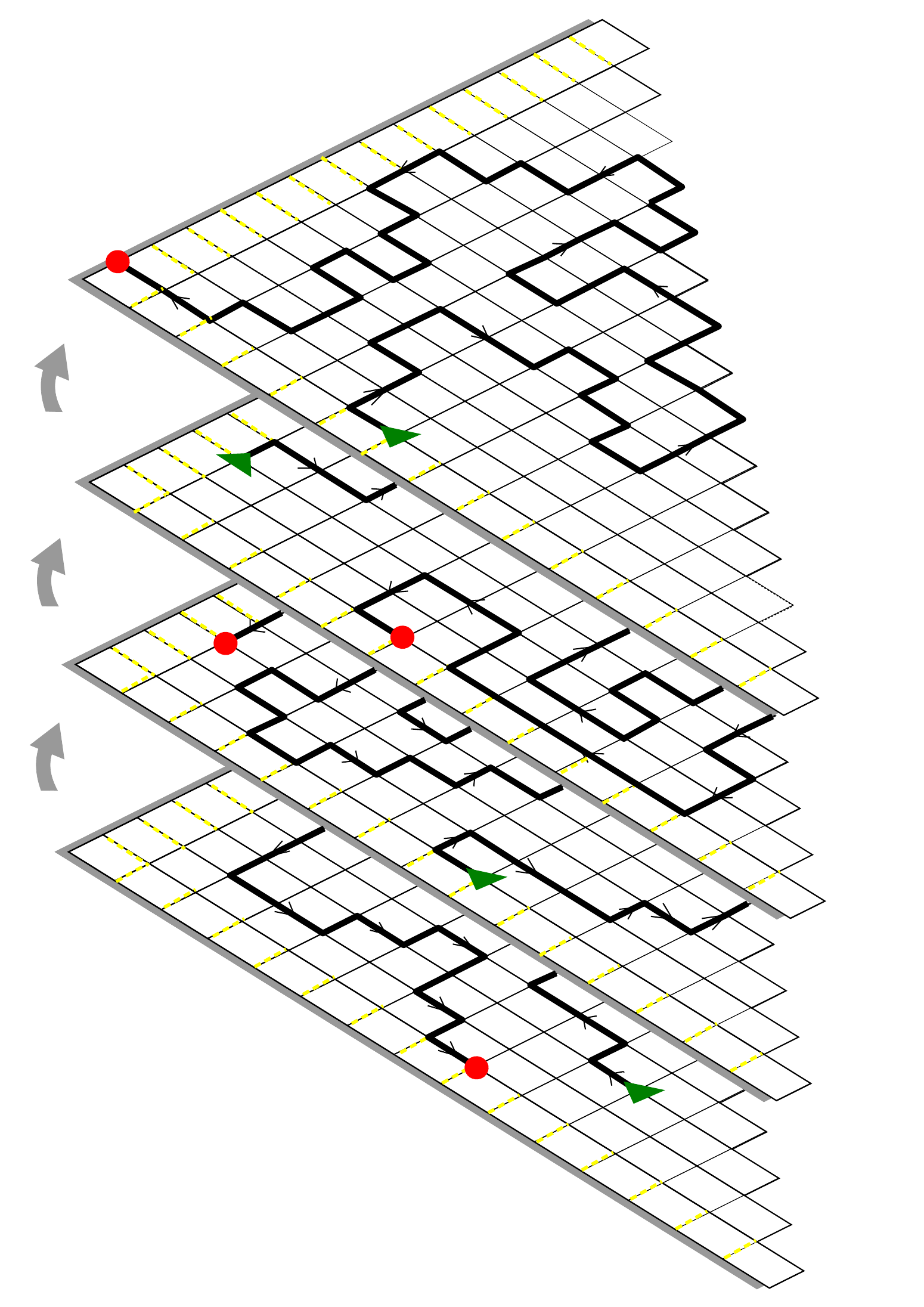}
\end{center}	
\caption{Coupling between two random walks, one absorbed at rate 1 and  the other at rate $N^{-\theta}$}
	\end{figure}

 	 Define  $Z(s)$  as
	\begin{equation*}
	Z(s)=
	\begin{cases}
	(X^0_{u_1}(s),1), & \mbox{if } s < \tau_{1}, \\
	\big(X^0_{u_{n}}(s-\xi_{_{n\!-\!1}}),n\big), & \mbox{if } \xi_{_{n\!-\!1}} \leq s < \xi_{n}, \,\,\, 2 \leq n \leq Y, \\
	(X^0_{u_{_{Y}}}(\tau_{_Y}),Y), & \mbox{if } s \geq \xi_{_{Y}}.
	\end{cases}
	\end{equation*}	
There are some important points to note about the random walk $\{Z(s),\,s\geq 0\}$. First, observe that the projection on the first coordinate has the law of $X^{\theta}$. Besides, the time $Z(s)$ spends in the diagonal in the level $n$, $D \times \{n\}$, is the same as the time spent in the diagonal by the random walk $X^0_{u_n}$ and these times have expectation bounded by $CN$ (for some universal constant $C$) regardless of the initial point, as observed in \eqref{t0u}. Another corollary of \eqref{t0u} is that if we begin the process $X^0_{u_n}$ in a point $u_n$ where $\partial V $ can be hit in only one jump, we can bound  $T^{0}_{u_n}$ by some constant $C$. This is the case when $n\geq 2$. Moreover, it is clear that the distribution of $Y$ is Geometric($N^{-\theta}$).
	
	For $n \geq 1$, define the random variables 
	\begin{equation*}
	D^{(n)}:=\int_0^\infty \Ind{Z(s)\in D\times \{n\}}\,ds.
	\end{equation*}
	Using \eqref{t0u}, we can bound $\EE[D^{(n)}]$ by a positive constant $C$, if $n \geq 2 $ and by $CN $ if $n=1$, where $C$ does not depend on the starting point $u_1$. This implies that
	\begin{equation*}
	\begin{split}
	T^{\theta}_{u}=\EE[D^{(1)}+\cdots +D^{(Y)}] &=\sum_{n\geq 1}\EE[(D^{(1)}+\cdots +D^{(n)})\Ind{Y = n}]\\
	&\leq \sum_{n\geq 1}CN\PP(Y = n)+\sum_{n\geq 1}C(n\!-\!1)\PP(Y = n)\\
	& \leq C(N+N^\theta).
	\end{split}
	\end{equation*}
	Equation \eqref{occupationtime} implies that $|\varphi ^N(x,y)|\leq C(N+N^{\theta })a_N^2$. Equation \eqref{linear_coef} concludes the proof.
	
\end{proof}

\section{Tightness}\label{s4}

In this section we prove the following proposition:
\begin{prop}\label{s06}
The sequence of probabilities $ \{\QQ_{N}\}_{N \in \NN} $ is tight in the Skorohod topology of $ D_{\MM}[0,T] $.
\end{prop}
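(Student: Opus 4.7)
The plan is to apply Aldous' criterion to the real-valued processes $\{\langle\pi^N_\cdot,H\rangle\}$ for $H$ in a suitable dense class of test functions, and then to deduce tightness of the measure-valued processes from the uniform bound $\pi^N_t([0,1])\leq 1$, which places every path $\pi^N_\cdot$ inside a compact subset of $\MM$.

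For $H\in C^2([0,1])$, the first step is to decompose $\langle\pi^N_\cdot,H\rangle$ via Dynkin's formula: the process
\begin{equation*}
M^{N,H}_t \;:=\; \langle\pi^N_t,H\rangle-\langle\pi^N_0,H\rangle-\int_0^t N^2 L_N\langle\pi^N_s,H\rangle\,ds
\end{equation*}
is a mean-zero martingale. Using the identities for $L_{N,0}\eta(x)$ and $L_{N,b}\eta(x)$ already exploited in Section \ref{s3} and a discrete summation by parts, one obtains
\begin{equation*}
N^2 L_N\langle\pi^N_s,H\rangle \;=\; \tfrac{1}{N}\sum_{x=1}^{N-1}(\Delta_N H)(\tfrac{x}{N})\,\eta_s(x)+N^{1-\theta}\!\left[H(\tfrac{1}{N})(\alpha-\eta_s(1))+H(\tfrac{N-1}{N})(\beta-\eta_s(N-1))\right],
\end{equation*}
where $\Delta_N H$ is a discrete Laplacian uniformly bounded by $\|H''\|_\infty$. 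A parallel carr\'e du champ computation yields $\EE[\langle M^{N,H}\rangle_T]=O(T\|H'\|_\infty^2/N)+O(T\|H\|_\infty^2/N^\theta)$, so the martingale piece vanishes in $L^2$ and trivially satisfies Aldous' condition.

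For the drift it remains to bound, for stopping times $\tau\leq T$ and $\theta'\leq\gamma$,
\begin{equation*}
\PP_{\mu_N}\!\left(\left|\int_\tau^{\tau+\theta'}\!N^2 L_N\langle\pi^N_s,H\rangle\,ds\right|>\epsilon\right)\;\leq\;\tfrac{\gamma}{\epsilon}\,\sup_{s,\eta}\!\left|N^2 L_N\langle\pi^N_s,H\rangle\right|,
\end{equation*}
uniformly in $N$. For $\theta\geq 1$ the integrand is bounded by $C(\|H''\|_\infty+\|H\|_\infty)$ and Aldous' condition holds directly for every $H\in C^2([0,1])$. The main obstacle is the regime $\theta\in[0,1)$, where the prefactor $N^{1-\theta}$ blows up: to circumvent this, I would restrict to $H\in C^2([0,1])$ satisfying $H(0)=H(1)=0$, so that $|H(1/N)|,|H((N-1)/N)|=O(\|H'\|_\infty/N)$ and the boundary term is $O(N^{-\theta}\|H'\|_\infty)$, again uniformly bounded. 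Combined with a separate (elementary) control of the total-mass process $\langle\pi^N_\cdot,1\rangle$, whose fluctuations are produced only by boundary Poisson clocks and are of order $O(\sqrt{\gamma})$ over a window of length $\gamma$, this yields tightness of the projections $\langle\pi^N_\cdot,H\rangle$ for a collection of test functions rich enough to conclude tightness of $\{\QQ_N\}$ in $D_\MM[0,T]$.
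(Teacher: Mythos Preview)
Your overall architecture---Dynkin decomposition, quadratic-variation bound on the martingale, pointwise bound on the drift, and the case split $\theta\ge 1$ versus $\theta\in[0,1)$ with test functions vanishing at the endpoints---is exactly the paper's. The treatment of $\theta\ge 1$ and of the martingale part is fine and matches the paper.

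The gap is in the final sentence for $\theta\in[0,1)$. Your claimed ``elementary control'' of the total-mass process $\langle\pi^N_\cdot,1\rangle$ is wrong. Plugging $H\equiv 1$ into your own formula gives drift
\[
N^2 L_N\langle\pi^N_s,1\rangle \;=\; N^{1-\theta}\big[(\alpha-\eta_s(1))+(\beta-\eta_s(N-1))\big],
\]
which has pointwise magnitude of order $N^{1-\theta}$, not $O(1)$. Over a window of length $\gamma$ the drift integral is $O(\gamma N^{1-\theta})$; the martingale part is indeed $O(\sqrt{\gamma}\,N^{-\theta/2})$, but that does not help. Showing that the time average of $\alpha-\eta_s(1)$ is small is precisely the content of the Replacement Lemma (Lemma~\ref{replace1}), which is a nontrivial entropy/Dirichlet-form estimate proved \emph{after} tightness---it is not elementary, and your $O(\sqrt{\gamma})$ claim is false.

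The paper avoids the total-mass process altogether. Its observation is that $C^2_0[0,1]$ is dense in $C[0,1]$ in the $L^1$ norm, and for the empirical measure one has the Riemann-sum bound
\[
\big|\langle\pi^N_t,\,H-G\rangle\big|\;\le\;\frac{1}{N}\sum_{x=1}^{N-1}\big|(H-G)(\tfrac{x}{N})\big|\;\xrightarrow[N\to\infty]{}\;\|H-G\|_{L^1},
\]
uniformly in $t$ and $\omega$. Hence the modulus-of-continuity estimate for any $H\in C^2$ is inherited from the one for a nearby $G\in C^2_0$, with no need to control $H\equiv 1$ separately. Replace your total-mass argument by this $L^1$ approximation and the proof goes through.
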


The proof is divided into 2 cases: $ \theta \geq 1 $ and $ \theta\in[0,1) $. We begin with $\theta \geq 1$.
\begin{proof}[Tightness for $ \theta \geq 1 $]
  In order to prove the assertion, it is enough to prove that, for all $\varepsilon >0$ and for all $H$ in a dense subset of $ C[0,1] $, 
\begin{equation}\label{tight2}
\lim_{\delta \rightarrow 0} \, \limsup_{N \rightarrow \infty} \, \PP_{\mu_N}\left[ \sup_{|t-s| \leq \delta} | \<\pi^{N}_{t} ,H\> -  \<\pi^{N}_{s} ,H\>| > \varepsilon \right] =  0.
\end{equation}
We take $H\in C^2[0,1]$. 
By Dynkin's formula we know that
\begin{equation}\label{M}
M^{N}_{t}(H):=\<\pi^{N}_{t}, H\>- \<\pi^{N}_{0},
H\>-\int_{0}^{t}N^{2}L_{N}\<\pi^{N}_{s},H\>\,ds
\end{equation}
is a martingale with respect to the natural filtration $ \mathcal{F}_{t}:=\sigma(\eta_{s}: s \leq t)$. By the previous expression, \eqref{tight2} holds if we prove that
\begin{equation}\label{tight5}
\lim_{\delta \rightarrow 0} \, \limsup_{N \rightarrow \infty} \, \EE_{\mu_{N}}\left[ \sup_{|t-s| \leq \delta} \big| M^{N}_{t}(H) -  M^{N}_{s}(H) \big| \right]=  0 \, ,
\end{equation}
and
\begin{equation}\label{tight6}
\lim_{\delta \rightarrow 0} \, \limsup_{N \rightarrow \infty} \, \EE_{\mu_{N}}\left[ \sup_{| t-s| \leq \delta} \Big|\int_{s}^{t}N^{2}L_{N}\<\pi^{N}_{r},H\>\,dr \Big| \right] =  0 \, .
\end{equation}
 In order to verify \eqref{tight5}, we will use the quadratic variation of  $M^{N}_{t}(H) $, which we denote by $ \<M^{N}_{t}(H)\> $. By Doob's inequality and the fact that $\{ M^N_t(H)^2 - \< M^N_t(H) \> \}_{t\geq 0}$ is a mean zero martingale, we have
\begin{equation*}
\begin{split}
\EE_{\mu_{N}} \left[ \sup_{|t-s| \leq \delta} \big| M^{N}_{t}(H) -  M^{N}_{s}(H)\big| \right] & \leq  \,4 \, \EE_{\mu_{N}} \left[ \<M^{N}_{T}(H)\> \right]^{\sfrac{1}{2}}.
\end{split}
\end{equation*}
We now prove that the quadratic variation $ \<M^{N}_{t}(H)\> $ converges to zero uniformly in $ t \in [0,T] $, when $ N \to \infty $.
We consider separately the contributions due to $ L_{N,0} $ and to 
$$ L_{N,b}:=L_{N,b}^\alpha+L_{N,b}^\beta .$$ First, we compute
\begin{equation*}
\begin{split}
N^2\big[L_{N,0}\<\pi^{N}_{s},H\>^{2}-& 2\<\pi^{N}_{s},H\>L_{N,0}\<\pi^{N}_{s},H\>\big]\\
&= \frac{1}{N^2}  \sum_{x=1}^{N\!-\!2}(\eta_{s}(x)-\eta_{s}(x+1))^{2}\left(N\Big[H(\pfrac{x+1}{N})-H(\pfrac{x}{N})\Big]\right)^{2} .  
\end{split}
\end{equation*}
By the mean value theorem, the term in parentheses is bounded by $\Vert (H')^2 \Vert_{\infty} $. Combining this bound with $(\eta_{s}(x)-\eta_{s}(x+1))^{2} \leq 1  $, we can prove that
\begin{equation*}
\lim_{N \to \infty}\int_{0}^{t} N^{2} [ L_{N,0} \< \pi^{N}_{s}, H \> ^2 - 2\< \pi^{N}_{s}, H
\> L_{N,0} \< \pi^{N}_{s}, H \>]\, ds = 0,
\end{equation*}
uniformly in $ t \in [0,T] $.
  The next step consists in verifying that
\begin{equation}\label{L_N,b}
\lim_{N \to \infty}\int_{0}^{t} N^2 [ L_{N,b} \< \pi^{N}_{s}, H \>^{2} - 2\< \pi^{N}_{s}, H\> L_{N,b} \< \pi^{N}_{s}, H \>]\, ds = 0,
\end{equation}
uniformly in $ t \in [0,T] $. We  compute
\begin{align*}
N^2 [ L_{N,b}\<\pi^{N}_{s},H\>^2 -  2\<\pi^{N}_{s},H\>L_{N,b}\<\pi^{N}_{s},H\> ]
 = \;& c\,N^{-\theta}\left[r_\alpha(\eta)H(\pfrac{1}{N})^2 
+r_\beta(\eta)H(\pfrac{N\!-\!1}{N})^{2}\right],
\end{align*}
with $r_\alpha$ and $r_\beta$ given by \eqref{ralpha} and \eqref{rbeta}.
 The absolute value of the last expression is bounded by $ 2c\Vert H^{2} \Vert_{\infty}N^{-\theta} $, and therefore we have \eqref{L_N,b}. This concludes the verification of \eqref{tight5}.

In order to verify \eqref{tight6}, we claim that we can 
 find a constant $ C:=C(H)>0 $ such that 
\begin{equation}\label{tight7}
|N^{2} L_{N}\<\pi^{N}_{r},H\>| \leq C
\end{equation} 
and \eqref{tight6} easily follows.
To prove \eqref{tight7}, we begin by handling 
\begin{equation}\label{gerador_interno_eq}
 N^2 L_{N,0} \< \pi^{N}_{s}, H \> 
=\pfrac{1}{N}\sum_{x=2}^{N\!-\!2}\eta_s(x)\Delta_N H\left(\pfrac{x}{N}\right)-\eta_{s}(N\!-\!1)\nabla_N^- H(\pfrac{N\!-\!1}{N})+\eta_{s}(1)\nabla_N^+ H(\pfrac{1}{N}),
\end{equation}
where
\begin{equation*}
\Delta_N H(u):=N^2\left[H\left(u+\pfrac{1}{N}\right) + H\left(u-\pfrac{1}{N}\right)-2H(u)\right],
\end{equation*}
\begin{equation*}
\nabla_N^+H(\pfrac{1}{N}):=N\left( H\left( \pfrac{2}{N} \right) - H(\pfrac{1}{N}) \right)
\end{equation*}
and
\begin{equation*}
\nabla_N^-H(\pfrac{N\!-\!1}{N}):=-N\left( H\left( \pfrac{N\!-\!2}{N} \right) - H(\pfrac{N\!-\!1}{N}) \right).
\end{equation*} 

  The mean value theorem implies that \eqref{gerador_interno_eq} is bounded by $2\Vert H''\Vert_{\infty} + 2\Vert H' \Vert_{\infty}$.
To finish the proof of \eqref{tight7}, we need only to verify that $ N^{2} L_{N,b} \< \pi^{N}_{s}, H \> $ is bounded by $ 2c\Vert H \Vert_{\infty} $.
 This is a consequence of 
\begin{equation}\label{gerador_fronteira_eq}
\begin{split}
 N^{2} L_{N,b} \< \pi^{N}_{s}, H \> = cN^{1-\theta} (\alpha-\eta_{s}(1))H(\pfrac{1}{N})+
cN^{1-\theta}(\beta-\eta_{s}(N\!-\!1))H(\pfrac{N\!-\!1}{N}).
\end{split}
\end{equation}
\end{proof}

\begin{proof}[Tightness for $ \theta \in[0,1)$]
	If we try to apply the same strategy used for $ \theta \geq 1 $, we will run into trouble when trying to control the modulus of continuity of $ \int_0^t N^2 L_{N,b}\< \pi^N_s, H\>\,ds $, because the expression in \eqref{gerador_fronteira_eq} can explode with $ N $. However, that expression vanishes if the test function $ H $ has compact support in $ (0,1) $. Therefore, we can reuse the computations for $ \theta \geq 1 $ to show that \eqref{tight2} is true when $ H \in C^2_c(0,1) $. To extend the validity of \eqref{tight2} to $ H \in C[0,1] $ we can use the estimate
	\begin{equation*}
	|\<\pi^N_t, H\> - \< \pi^N_s, H\>| \leq \frac{1}{N}\sum_{x=1}^{N\!-\!1}|H(\pfrac{x}{N})|,
	\end{equation*}
and approximate $ H $ in $ L^1[0,1] $ (with the Lebesgue measure) by functions in $ C^2_c(0,1) $.

\end{proof}

\begin{nota}
{\it{ Markov's inequality and the fact that $\lim_{N\to\infty} \EE_{\mu_{N}} \left[ \sup_{0 \leq t \leq T} |M^{N}_{t}(H)|^2\right]^{\sfrac{1}{2}}=0 $ imply that
\begin{equation}\label{limprob}
\lim_{N\rightarrow\infty}\PP_{\mu_N}\left[\sup_{0\leq t\leq T}
|M^{N}_{t}(H)|>\delta\right]=0,\quad\mbox{for all }\delta >0.
\end{equation}}}
\end{nota}

\section{Replacement Lemma and Sobolev Space}\label{s5}

In this section we establish the Replacement Lemma (see Lemma \ref{replace1} for the regime $ \theta \in[0,1) $ and Lemma $ \ref{substituicao} $ for $ \theta \geq 1 $) and prove that the limiting densities of the empirical measure are in the Sobolev Space $L^2(0,T;\mathcal{H}^1)$ (Proposition \ref{prop6.8}).
Without loss of generality, we suppose that $\alpha\leq \beta$.  Here we need to use that $0<\alpha$ and $\beta<1$.  Let $\nu_{\gamma(\cdot)}^N$ 	be the Bernoulli product measure on $\Omega_N$ with marginals given by	
 $$\nu_{\gamma(\cdot)}^N\{\eta(x)=1\}=\gamma(\pfrac{x}{N}).$$ 
Denote by $H_N (\mu_N | \nu_{\gamma(\cdot)}^N)$ the relative entropy of a probability
measure $\mu_N$ on $\Omega_N$ with respect to the probability measure $\nu_{\gamma(\cdot)}^N$. For the definition and properties of the relative entropy, we
refer the reader to  Appendix 1 in \cite{kl}. We can obtain a standard bound for $H_N (\mu_N | \nu_{\gamma(\cdot)}^N)$, which says that there
exists a finite constant $K_0:=K_0(\alpha,\beta)$, such that
\begin{equation}
\label{f06}
H_N (\mu_N | \nu_{\gamma(\cdot)}^N) \;\le\; K_0 N,
\end{equation}
for any probability measure $\mu_N$ on ${\Omega_N}$.

\subsection{Dirichlet form}
The Dirichlet form $\< L_{N}g,g\>_{\mu}$ does not always have a closed form. In this subsection we compare the Dirichlet form with the closed forms defined as
\begin{eqnarray*}
\!\!\!\!\!\!\!\!\!\!\!\!\!\! &&
D_{N,0}(g,\mu)\;:=\; \frac{ 1}{2}\sum_{x=1}^{N\!-\!2} 
\sum_{\eta\in\Omega_N} \Big( g(\eta^{x,x+1})-
g(\eta)\Big)^2 \, \mu (\eta),
\end{eqnarray*}
\begin{eqnarray*}
\!\!\!\!\!\!\!\!\!\!\!\!\!\! &&
D_{N,b}^\alpha(g,\mu)\;:=\; \frac{ 1}{2}
\sum_{\eta\in\Omega_N}  cN^{-\theta} r_\alpha(\eta)\Big( g(\eta^{1}) -
g(\eta)\Big)^2 \, \mu (\eta),
\end{eqnarray*}
and
\begin{eqnarray*}
\!\!\!\!\!\!\!\!\!\!\!\!\!\! &&
D_{N,b}^\beta(g,\mu)\;:=\; \frac{ 1}{2}
\sum_{\eta\in\Omega_N} cN^{-\theta} r_\beta(\eta)\Big(g(\eta^{N\!-\!1}) -
g(\eta) \Big)^2 \, \mu(\eta),
\end{eqnarray*}
where   $\mu$ is any measure on $\Omega_N$  and $g:\Omega_N\to\mathbb{R}$ is any function. 

\begin{lema}\label{lema41}
Let $\gamma:[0,1]\to(0,1)$ be a function.
Let $f:\Omega_N\to\RR_+$ be a density with respect to the measure $\nu_{\gamma(\cdot)}^N$. Then
\begin{enumerate}
\item[i)]  If $\gamma$  is a constant function,
then
\begin{equation*}
 \big\< L_{N,0} \sqrt f ,\, \sqrt f\,\big\>_{\nu_{\gamma(\cdot)}^N}\;=\; - 
 D_{N,0}(\sqrt{f},\nu_{\gamma(\cdot)}^N);
\end{equation*}
\item[ii)] if $\gamma$ is smooth, then  there exists a constant $C_0>0$ (which depends on $\gamma$) such that 
\begin{equation*}
 \big\< L_{N,0} \sqrt f ,\, \sqrt f\,\big\>_{\nu_{\gamma(\cdot)}^N}\;=\; - \frac{1}{2}
 D_{N,0}(\sqrt{f},\nu_{\gamma(\cdot)}^N)+\mc E_N(\gamma),
\end{equation*}
with
$
|\mc E_N(\gamma)|\leq\frac{C_0}{N}.
$
\end{enumerate}

\end{lema}
 \begin{proof}
 	
Writing $ \< L_{N,0}\sqrt{f}, \sqrt{f}\>_{\nu_{\gamma(\cdot)}^N} = \frac{1}{2}\< L_{N,0}\sqrt{f}, \sqrt{f}\>_{\nu_{\gamma(\cdot)}^N} + \frac{1}{2}\< L_{N,0}\sqrt{f}, \sqrt{f}\>_{\nu_{\gamma(\cdot)}^N} $, making the change of variables $ \eta \mapsto \eta^{x,x+1} $ in the second term, adding and subtracting the term 
\begin{equation*} \frac{1}{2}
\sum_{x=1}^{N\!-\!2} \sum_{\eta \in \Omega_N}
\lrp{\sqrt{f(\eta^{x,x+1})} - \sqrt{f(\eta)}
	} \sqrt{f(\eta^{x,x+1})}\;
\nu_{\gamma(\cdot)}^N(\eta)
\end{equation*} 
and using the definition of $D_{N,0}$, then  $\< L_{N,0}\sqrt{f}, \sqrt{f}\>_{\nu_{\gamma(\cdot)}^N} $ can be rewritten as $-D_{N,0}(\sqrt{f},\nu_{\gamma(\cdot)}^N)+ g_N(\gamma)$, where
\begin{equation}\label{dirichlet_interno1}
g_N(\gamma)=\frac{1}{2}
\sum_{x=1}^{N\!-\!2} \sum_{\eta \in \Omega_N}
\lrp{\sqrt{f(\eta^{x,x+1})} - \sqrt{f(\eta)}
	} \sqrt{f(\eta^{x,x+1})}
\lrp{1 -\frac{ \nu_{\gamma(\cdot)}^N(\eta^{x,x+1})}{\nu_{\gamma(\cdot)}^N(\eta)}
	}\nu_{\gamma(\cdot)}^N(\eta).
\end{equation} 
 To handle $g_N(\gamma)$, 
we start observing that
\begin{equation*}
\Big|1-
 \frac{ \nu_{\gamma(\cdot)}^N(\eta^{x,x+1})}{\nu_{\gamma(\cdot)}^N(\eta)}\Big|\leq \tilde c_\gamma\,\Big\vert\gamma(\frac{x+1}{N})-\gamma(\frac{x}{N})\Big\vert,
\end{equation*} 
where $\tilde c_\gamma>0$ depends on $\gamma$.
Thus, if $\gamma$ is constant, then $ g_N(\gamma)=0$.
But,  if $\gamma$ is smooth, we  need to analyze  $ g_N(\gamma)$. Applying the Young's inequality:
\begin{equation}\label{young}
 ab \leq \pfrac{B}{2}a^2 +\pfrac{1}{2B} b^2,
 \end{equation} in  \eqref{dirichlet_interno1}, with $B=1$, $a=|\sqrt{f(\eta^{x,x+1})} - \sqrt{f(\eta)}|$ and  $b=\sqrt{f(\eta^{x,x+1})}\,\big|1 -\sfrac{ \nu_{\gamma(\cdot)}^N(\eta^{x,x+1})}{\nu_{\gamma(\cdot)}^N(\eta)}
	\big|$,   we get
$
 |g_N(\gamma)|\leq
\frac{1}{2} D_{N,0}(\sqrt f,\nu_{\gamma(\cdot)}^N) +
\mc E_N(\gamma),
$ where
\begin{equation*}
 \mc E_N(\gamma)=
\frac{1}{2}
\sum_{x=1}^{N\!-\!2} \sum_{\eta \in \Omega_N}
f(\eta) \,\Big|1-
 \frac{ \nu_{\gamma(\cdot)}^N(\eta)}{\nu_{\gamma(\cdot)}^N(\eta^{x,x+1})}\Big|^2\,\frac{\nu_{\gamma(\cdot)}^N(\eta^{x,x+1})}{\nu_{\gamma(\cdot)}^N(\eta)}\,
\nu_{\gamma(\cdot)}^N(\eta).
\end{equation*}
In the preceding line we performed the change of variables $ \eta \mapsto \eta^{x,x+1} $.
To finish the proof we need to bound $\mc E_N(\gamma)$ by $C_0/N$. In order to do this, we use
\begin{equation}\label{bound}
\Big|1-
 \frac{ \nu_{\gamma(\cdot)}^N(\eta^{x,x+1})}{\nu_{\gamma(\cdot)}^N(\eta)}\Big|\leq \tilde c_\gamma\,\Vert\gamma'\Vert_{\infty}\frac{1}{N},
\end{equation}
 $|\sfrac{\nu_{\gamma(\cdot)}^N(\eta^{x,x+1})}{\nu_{\gamma(\cdot)}^N(\eta)}|\leq C_\gamma$ and
that  $ f $ is a density with respect to $ \nu^N_{\gamma(\cdot)}  $. 

 \end{proof}
 
  \begin{lema}\label{dirichlet_gerador_1}
  Let $\gamma:[0,1]\to(0,1)$ be a function.
For any  function $g\in L^2(\nu^N_{\gamma(\cdot)})$, we have
$$\< L_{N,b}^\alpha\,g,\, g \>_{\nu^N_{\gamma(\cdot)}} = -D_{N,b}^\alpha(g,\nu^N_{\gamma(\cdot)})+\mc E_N^\alpha(\gamma,g),$$
with $|\mc E_N^\alpha(\gamma,g)|\leq \,c\,C(\gamma)\,|\gamma(\pfrac{1}{N})-\alpha|\,N^{-\theta}\,\Vert g\Vert^2_{\nu_{\gamma(\cdot)}^N}$.
 A similar equality holds for $L_{N,b}^\beta$, just replacing $|\gamma(\pfrac{1}{N})-\alpha|$ by
$|\gamma(\pfrac{N-1}{N})-\beta|$.

\end{lema}

\begin{proof}
Define the sets
 $A_i=\{\eta\in\Omega_n;\, \eta(1)=i\}$, for $i=0,1$, and write $\< L_{N,b}^\alpha\,g,\, g \>_{\nu^N_{\gamma(\cdot)}}$ as
\begin{equation*}\begin{split}
&\sum_{\eta\in A_0}c\frac{\alpha}{N^\theta}\Big[g(\eta^1)g(\eta)-\pfrac{1}{2}g(\eta)^2\Big]\nu_{\gamma(\cdot)}^N(\eta)+\sum_{\eta\in A_1}c\frac{\alpha}{N^\theta}\Big[-\pfrac{1}{2}g(\eta^1)^2\Big]\frac{1-\gamma(\frac{1}{N})}{\gamma(\frac{1}{N})}\;\nu_{\gamma(\cdot)}^N(\eta)\\
+&\sum_{\eta\in A_1}c\frac{(1-\alpha)}{N^\theta}\Big[g(\eta^1)g(\eta)-\pfrac{1}{2}g(\eta)^2\Big]\nu_{\gamma(\cdot)}^N(\eta)+\sum_{\eta\in A_0}c\frac{(1-\alpha)}{N^\theta}\Big[-\pfrac{1}{2}g(\eta^1)^2\Big]\frac{\gamma(\frac{1}{N})}{1-\gamma(\frac{1}{N})}\;\nu_{\gamma(\cdot)}^N(\eta).\\
\end{split}
\end{equation*}
If $\gamma(\frac{1}{N})=\alpha$, then $\< L_{N,b}^\alpha\,g,\, g \>_{\nu^N_{\gamma(\cdot)}}=-D_{N,b}^\alpha(g,\nu^N_{\gamma(\cdot)})$.
For a general $\gamma$, we have that  $\< L_{N,b}^\alpha\,g,\, g \>_{\nu^N_{\gamma(\cdot)}}$ is equal to $-D_{N,b}^\alpha(g,\nu^N_{\gamma(\cdot)})$ plus
\begin{equation*}\begin{split}
&\sum_{\eta\in A_1}c\frac{\alpha}{N^\theta}\Big[-\pfrac{1}{2}g(\eta^1)^2\Big]\Big[\frac{1-\gamma(\frac{1}{N})}{\gamma(\frac{1}{N})}-\frac{1-\alpha}{\alpha}\Big]\,\nu_{\gamma(\cdot)}^N(\eta)\\
+&\sum_{\eta\in A_0}c\frac{(1-\alpha)}{N^\theta}\Big[-\pfrac{1}{2}g(\eta^1)^2\Big]\Big[\frac{\gamma(\frac{1}{N})}{1-\gamma(\frac{1}{N})}-\frac{\alpha}{1-\alpha}\Big]\,\nu_{\gamma(\cdot)}^N(\eta).\\
\end{split}
\end{equation*}
The expression above is bounded by
\begin{equation*}\begin{split}
&c\,|\gamma(\pfrac{1}{N})-\alpha|\,\frac{C(\gamma)}{N^\theta}\sum_{\eta\in \Omega_N}g(\eta)^2\,\nu_{\gamma(\cdot)}^N(\eta).
\end{split}
\end{equation*}
Above we have $g(\eta)$ instead of $g(\eta^1)$, because  we performed a change of variables and estimate the error, as we did in Lemma \ref{lema41}.

\end{proof}

\begin{cor}\label{dirichlet_gerador}
Let  $\nu^N_\beta$ the Bernoulli product measure in $\Omega_N$ with constant parameter $\beta\in(0,1)$. Then, there exists a positive constant $C_0=C(\alpha,\beta,c)$ such that
$$\big\< L_{N,b}^\alpha\,\sqrt f,\, \sqrt f \,\big\>_{\nu^N_{\beta}} = -D_{N,b}^\alpha(\sqrt f,\nu^N_{\beta})+O_{\alpha,\beta,c}(\pfrac{1}{N^\theta}),$$
for all  $f$ density with respect to $\nu^N_\beta$.
\end{cor}
\begin{proof}
Follows from Lemma \ref{dirichlet_gerador_1}, taking the function $\gamma$ constant equal to $\beta$. 
\end{proof}

\subsection{Replacement Lemma for $\theta\in[0,1)$}\label{replace111}
Let $\gamma:[0,1]\to [0,1]$ be a smooth function such that $\alpha\leq \gamma(u)\leq\beta$, for all $u\in[0,1]$, and assume that there exists a neighborhood of $0$ where the function $\gamma $ is equal to $\alpha$ and a neighborhood of $1$, where the function $\gamma $ is equal to $\beta$.

\begin{lema}[Replacement Lemma]\label{replace1}
Suppose $\theta\in[0,1)$. For all function $G\in C([0, T] )$, $t\in[0,T]$ and $x\in\{1,N-1\}$,  we have
\begin{equation*}
 \varlimsup_{N\to\infty}
\mathbb E_{\mu_N}\Big[\,\Big|\int_0^t G_s\;\{\eta_s(x)-\zeta_x\}\, ds\Big|\,\Big]\;=\;0,
\end{equation*}
where $\zeta_1=\alpha$ and $\zeta_{N\!-\!1}=\beta$.
\end{lema}

In the proof of this lemma we use the following estimates:
\begin{lema}\label{corcor}Let $f$ a density function with respect to $\nu_{\gamma(\cdot)}^N$.
For any $B>0$, there are non-negative constants $C_0=C(\gamma),C_1=C_1(\alpha),C_2=C_2(\alpha),C_3=C_3(\beta),C_4=C_4(\beta)$  such that
  \begin{equation*}
\begin{split}
\Big|\int
&(\eta(1)-\alpha)\,f(\eta)\;d \nu_{\gamma(\cdot)}^N(\eta)\Big| \leq B\, C_1-\frac{C_2}{B}\,N^\theta\,\big\< L_{N}\,\sqrt f,\, \sqrt f \,\big\>_{\nu^N_{\gamma(\cdot)}}+\frac{C_0C_2}{B}\,N^{\theta-1}\\
\end{split}
\end{equation*}
and
  \begin{equation*}
\begin{split}
\Big|\int
&(\eta(N-1)-\beta)\,f(\eta)\;d \nu_{\gamma(\cdot)}^N(\eta)\Big|\leq B\, C_3-\frac{C_4}{B}\,N^\theta\,\big\< L_{N}\,\sqrt f,\, \sqrt f \,\big\>_{\nu^N_{\gamma(\cdot)}}+\frac{C_0C_4}{B}\,N^{\theta-1}.
\end{split}
\end{equation*}
 \end{lema}
 
The proof of these estimates we postponed to the end of this subsection. Now, we present the proof of the Replacement Lemma (Lemma \ref{replace1}).

\begin{proof}[Proof of Lemma \ref{replace1}]
We will only prove the  assertion for $x=1$, the other one is similar.
From Jensen's inequality and the definition of entropy, for any $A\in\mathbb R$ (which will be chosen large),
the expectation considered in the statement of the lemma is bounded from above by
\begin{equation}\label{log}
\frac{H_N(\mu_N|\nu_{\gamma(\cdot)}^N)}{A N}+\frac{1}{A N}\log \mathbb E_{\nu_{\gamma(\cdot)}^N} \Big[
\exp\Big\{A\,N\Big|\int_0^t G_s\;\{
\eta_s(1)-\alpha\}\,ds\Big|\Big\}\Big].
\end{equation}
By \eqref{f06}, the first term above is bounded by $ K_0/A$, so we only need to show that the second term vanishes as $ N\to\infty $.
Since $e^{|x|}\leq e^x+e^{-x}$ and
\begin{equation}\label{log bounds}
\varlimsup_N \pfrac{1}{N}\log (a_N+b_N)= \max\Big\{
\varlimsup_N \pfrac{1}{N}\log a_N,\varlimsup_N \pfrac{1}{N}\log b_N\Big\},
\end{equation}
for all sequences $\{a_N\}$ and $\{b_N\}$ of positive numbers,
we can study the second term in \eqref{log} without the absolute value inside the exponential. 
Using Feynman-Kac's Lemma (see Lemma \ref{FK-sem med inv}),
we reduce the problem to that of estimating
\begin{equation*}
T\;\sup_{f\textrm{~density}} \,\Bigg\{\Vert G\Vert_\infty \,\Big|\int (\eta_1 - \alpha)\,f(\eta)\,d\nu_{\gamma(\cdot)}^N(\eta)\Big|+\frac{N}{A}\, \big\<L_N\sqrt{f},\sqrt{f}\,\big\>_{\nu_{\gamma(\cdot)}^N}\Bigg\}.
\end{equation*}
Using Lemma \ref{corcor}, we can bound the last expression by
\begin{equation*}
\begin{split}
T\,\sup_{f\textrm{~density}} \,\Bigg\{\Vert G\Vert_\infty \Big(B\, C_1-\frac{2C_2}{B}\,N^\theta\,\big\< L_{N}\,\sqrt f,\, \sqrt f \,\big\>_{\nu^N_{\gamma(\cdot)}}+\frac{2C_0C_2}{B}\,N^{\theta-1}\Big)+\frac{N}{A}\, \big\<L_N\sqrt{f},\sqrt{f}\,\big\>_{\nu_{\gamma(\cdot)}^N}\Bigg\}.
\end{split}
\end{equation*}
Taking $B=2C_2\Vert G\Vert_\infty A\,N^{\theta-1}$, the last expression becomes
\begin{equation*}
\begin{split}
\frac{TC_0}{A} +\, 2TC_1C_2\Vert G\Vert_\infty^2 \,\frac{A}{N^{1-\theta}}.
\end{split}
\end{equation*}
The statement follows if we choose $A=N^{\sfrac{(1-\theta)}{2}}$.
\end{proof}

The inequalities in
 Lemma \ref{corcor} are similar, then we will focus only in the first one, which deals with the left-hand side of the boundary.

\begin{lema}\label{lemausual} There exist  a constant $C_1=C_1(\alpha)>0$ such that, for all   density functions $f$  with respect to $\nu_{\gamma(\cdot)}^N$ and  for all $B>0$, we have
\begin{equation*}
\begin{split}
\Big|\int
&(\eta(1)-\alpha)\,f(\eta)\;d \nu_{\gamma(\cdot)}^N(\eta)\Big|\leq B\, C_1+\frac{1}{2B}\var(\sqrt{f_1}),\\
\end{split}
\end{equation*}
where $f_1$ is the expectation of the function $f$ conditioned to the occupation in the site $1$:
\begin{equation*}
\begin{split}
f_1(\eta(1)):=E_{\nu_{\gamma(\cdot)}^N}[f|\eta(1)]&=
\sum_{\bar{\eta}}f(\eta(1),\bar{\eta})\;\overline{\nu_{\gamma(\cdot)}^N}(\bar{\eta}).
\end{split}
\end{equation*}
In the expression above: $\eta=(\eta(1),\bar{\eta})\in \{0,1\}\times\{0,1\}^{\{2,\dots, N\!-\!1\}}$ and the measure
$\nu_{\gamma(\cdot)}^N(\eta)=\nu_\alpha^1(\eta(1))\overline{\nu_{\gamma(\cdot)}^N}(\bar{\eta})$.
\end{lema}
To obtain  the Lemma \ref{corcor} from the last lemma, we need to relate $ \var(\sqrt{f_1})$ with the Dirichlet form. To deal with this question, we will use the next lemma together with Lemmas \ref{lema41} and \ref{dirichlet_gerador_1}. 

\begin{lema}\label{lema42}
 Let $f$ be a density with respect to the  measure $\nu_{\gamma(\cdot)}^N$. Then there exists a constant $C_2=C_2(\alpha)>0$ such that
\begin{equation*}
\var(\sqrt{f_1})\;\leq\; C_2\,N^\theta\,D_{N,b}^\alpha(\sqrt{f},\nu_{\gamma(\cdot)}^N).
\end{equation*}
\end{lema}
Finally, we are in position to prove Lemma \ref{corcor}.

 \begin{proof}[Proof of Lemma \ref{corcor}]
 From Lemmas \ref{lemausual} and \ref{lema42}, we have
  \begin{equation*}
\begin{split}
\Big|\int
&(\eta(1)-\alpha)\,f(\eta)\;d \nu_{\gamma(\cdot)}^N(\eta)\Big|\leq B\, C_1+\frac{1}{2B}C_2\,N^\theta\,D_{N,b}^\alpha(\sqrt{f},\nu_{\gamma(\cdot)}^N)\leq B\, C_1+\frac{1}{2B}C_2\,N^\theta\,D_{N}(\sqrt{f},\nu_{\gamma(\cdot)}^N),\\
\end{split}
\end{equation*}
where $D_{N}:= D_{N,b}^\alpha+D_{N,b}^\beta+D_{N,0}$. The last inequality is true because $D_{N,b}^\beta(\sqrt{f},\nu_{\gamma(\cdot)}^N)$ and $D_{N,0}(\sqrt{f},\nu_{\gamma(\cdot)}^N)$ are non-negative.
By the choice of $\gamma$ and  Lemmas \ref{lema41} and \ref{dirichlet_gerador_1}, we have that $D_{N}(\sqrt{f},\nu_{\gamma(\cdot)}^N)$ is equal to $-\< L_{N,b}^\alpha\,\sqrt f,\, \sqrt f \>_{\nu^N_{\gamma(\cdot)}} -\< L_{N,b}^\beta\,\sqrt f,\, \sqrt f \>_{\nu^N_{\gamma(\cdot)}}-2\< L_{N,0}\,\sqrt f,\, \sqrt f \>_{\nu^N_{\gamma(\cdot)}} -\,\mc E_N(\gamma)$. Then 
 \begin{equation*}
\begin{split}
\Big|\int
&(\eta(1)-\alpha)\,f(\eta)\;d \nu_{\gamma(\cdot)}^N(\eta)\Big|\leq B\, C_1-\frac{2}{2B}C_2\,N^\theta\,\big\< L_{N}\,\sqrt f,\, \sqrt f \,\big\>_{\nu^N_{\gamma(\cdot)}}+\frac{2}{2B}C_2\,N^\theta\,\frac{C_0}{N} ,\\
\end{split}
\end{equation*}
because $|\mc E_N(\gamma)|\leq \frac{C_0}{N}  $ and $ -\< L_{N,b}^\alpha\,\sqrt f,\, \sqrt f \>_{\nu^N_{\gamma(\cdot)}} -\< L_{N,b}^\beta\,\sqrt f,\, \sqrt f \>_{\nu^N_{\gamma(\cdot)}}\leq 0 $.

 \end{proof}

To the end of this subsection we will present the proofs of the Lemmas \ref{lemausual} and \ref{lema42}.

\begin{proof}[Proof of Lemma \ref{lemausual}]
We will prove  the inequality for $\eta(1)-\alpha$ without absolute value, because we can obtain the same bound  for $\alpha-\eta(1)$. 
From definition of $f_1$ in the statement of Lemma \ref{lemausual}, we have 
\begin{equation*}
\begin{split}
\int
\{\eta(1)-\alpha\}f(\eta)d \nu_{\gamma(\cdot)}^N(\eta) &=
\sum_{\eta(1)}\{\eta(1)-\alpha\} \;\sum_{\bar{\eta}}f(\eta(1),\bar{\eta})\;\overline{\nu_{\gamma(\cdot)}^N}(\bar{\eta})\;\;\nu_\alpha^1(\eta(1))\\
&=
\sum_{\eta(1)}\{\eta(1)-\alpha\} f_1(\eta(1))\nu_\alpha^1(\eta(1)).\\
\end{split}
\end{equation*}
Since the function $\{\eta(1)-\alpha\}$ has zero mean, the last expression becomes
\begin{equation*}
\begin{split}
\sum_{\eta(1)}\{\eta(1)-\alpha\}\big\{ f_1(\eta(1))-E_{\nu_\alpha^1}[\sqrt{f_1}]^2\big\}\nu_\alpha^1(\eta(1)).\\
\end{split}
\end{equation*}
Let $B>0$. Using the Young's inequality \eqref{young}, with $a=\{\eta(1)-\alpha\}\{\sqrt{ f_1(\eta(1))}+E_{\nu_\alpha^1}[\sqrt{f_1}]\}$ and $b=\sqrt{ f_1(\eta(1))}-E_{\nu_\alpha^1}[\sqrt{f_1}]$, the sum above is bounded from above by
\begin{equation*}
\begin{split}
\frac{B}{2}\sum_{\eta(1)}\{\eta(1)-\alpha\}^2\big\{\sqrt{ f_1(\eta(1))}+E_{\nu_\alpha^1}[\sqrt{f_1}]\big\}^2\nu_\alpha^1(\eta(1))+\frac{1}{2B}\var(\sqrt{f_1}).\\
\end{split}
\end{equation*}
Now, as $f$ is a density with respect to $\nu_{\gamma(\cdot)}^N$, we have
\begin{equation*}
\begin{split}
& \frac{1}{2}\sum_{\eta(1)}\{\eta(1)-\alpha\}^2\{\sqrt{ f_1(\eta(1))}+E_{\nu_\alpha^1}[\sqrt{f_1}]\big\}^2\nu_\alpha^1(\eta(1)) \leq C_1(\alpha),
\end{split}
\end{equation*}
which finishes the proof.
\end{proof}

 \begin{proof}[Proof of Lemma \ref{lema42}]
 In order to prove the inequality in the statement of this lemma,
we compute the variance for $\sqrt{f_1}$:
\begin{equation}\label{var1}
\begin{split} 
&\var(\sqrt{f_1})=
\sum_{\eta(1)} \Big(\sqrt{f_1(\eta(1))}-E_{\nu_\alpha^1}[\sqrt{f_1}]\Big)^2\;\nu_\alpha^1(\eta(1))\\
&=
(1-\alpha)^2\alpha \Big(\sqrt{f_1(1)}-\sqrt{f_1(0)}\Big)^2+
(1-\alpha)\alpha^2 \Big(\sqrt{f_1(0)}-\sqrt{f_1(1)}\Big)^2\\
&\leq c(\alpha)\, N^\theta\sum_{\eta(1)}c\Big[\pfrac{\alpha}{N^\theta}(1-\eta(1))+\pfrac{1-\alpha}{N^\theta}\eta(1)\Big]\Big( \sqrt{f_1(1-\eta(1))} -
\sqrt{f_1(\eta(1))} \Big)^2 \,\nu_\alpha^1(\eta(1)).
\end{split}
\end{equation}
 Keep in mind the definition of $f_1$ and observe that
\begin{equation}\label{41aas}
\begin{split}
&\Big( \sqrt{ f_1(1-\eta(1))} -
\sqrt{f_1(\eta(1))} \Big)^2 \\&=\Bigg( \Big(\sum_{\bar{\eta}}f(1-\eta(1),\bar{\eta})\;\overline{\nu_{\gamma(\cdot)}^N}(\bar{\eta})\Big)^{1/2} -
\Big(\sum_{\bar{\eta}}f(\eta(1),\bar{\eta})\;\overline{\nu_{\gamma(\cdot)}^N}(\bar{\eta})\Big)^{1/2} \Bigg)^2 \\
&\leq \sum_{\bar{\eta}}\Bigg( \Big(f(1-\eta(1),\bar{\eta})\Big)^{1/2} -
\Big(f(\eta(1),\bar{\eta})\Big)^{1/2} \Bigg)^2\;\overline{\nu_{\gamma(\cdot)}^N}(\bar{\eta}). \\
\end{split}
\end{equation}
The last inequality follows from 
$$\Big| \,\Vert g_1\Vert_{L^2(\mu)}-\Vert g_2\Vert_{L^2(\mu)}\Big|\,\leq\,\Vert g_1-g_2\Vert_{L^2(\mu)},$$
choosing the suitable functions $g_1$ and $g_2$ and measure $\mu$.
Thus, using the inequality \eqref{41aas} in the expression \eqref{var1}, we bound $\var(\sqrt{f_1})$ by
\begin{equation*}
\begin{split}
 c(\alpha)\, N^\theta\sum_{\eta}c\Big[\pfrac{\alpha}{N^\theta}(1-\eta(1))+\pfrac{1-\alpha}{N^\theta}\eta(1)\Big]\Big( \sqrt{f(\eta^1)} -\sqrt{f(\eta)} \Big)^2 \nu_{\gamma(\cdot)}^N(\eta).
\end{split}
\end{equation*}
Then, the statement of this lemma follows from last expression combined with the expression for $D_{N,b}^\alpha(\sqrt{f},\nu_{\gamma(\cdot)}^N)$ and for $r_\alpha(\eta)$.
 \end{proof}

\subsection{Replacement Lemma for $\theta \in[1,\infty)$}

For the following we  define the empirical density at the boundary $\{1,N\!-\!1\}$ as
\begin{equation*}
 \eta^{\varepsilon N}(1)\;=\;\frac{1}{\varepsilon N}\sum_{y=1}^{\floor{\varepsilon N}}\eta(y)
\qquad
\mbox{and}
\qquad
 \eta^{\varepsilon N}(N\!-\!1)\;=\;\frac{1}{\varepsilon N}\sum_{y=N\!-\!1-\floor{\varepsilon N}}^{N\!-\!1 }\eta(y).
\end{equation*}

The next lemma allows us to replace the random variables $\eta_s(1)$ and $\eta_s(N\!-\!1)$ by functions of the empirical measure, namely $\eta^{\varepsilon N}(1)$ and $\eta^{\varepsilon N}(N\!-\!1)$.

\begin{lema}[Replacement Lemma]\label{substituicao}
Suppose $\theta \geq 1$.  For all function $G\in C([0, T] )$, $t\in[0,T]$ and $x\in\{1,N-1\}$,  we have
\begin{equation*}
	\varlimsup_{\varepsilon \downarrow 0}\varlimsup_{N \to \infty} 
			\EE_{\mu_N}\left[
			\Big|\int_{0}^{t} G(s)[\eta_{s}^{\varepsilon N}(x)-\eta_{s}(x)]\,ds\Big|
		\right]=0.
\end{equation*}
\end{lema}

\begin{proof}

We will work in the case $x=1$, the other one is similar.
As in Lemma \ref{replace1}, we use the definition of relative entropy, Jensen's inequality and Feynman-Kac's Lemma, reducing the problem to that of estimating
\begin{equation*}
\frac{1}{A N}H(\mu_N | \nu_\beta^N)+
\int_0^t\;\sup_{f\textrm{~density}} \,\Big\{G(s) \int [\eta^{\varepsilon N}(1)-\eta(1)]\,f(\eta)\,d\nu_\beta^N(\eta)+\pfrac{N}{A}\, \<L_N\sqrt{f},\sqrt{f}\>_{\nu_\beta^N}\Big\}\;ds,
\end{equation*} 
for all $A>0$.
From now on, this proof follows the same steps as the proof of Lemma \ref{replace1}, using  Lemma \ref{llll} instead of  Lemma \ref{corcor}. 
Choosing $B=N/A$ an using the bound \eqref{f06}, the expression above is bounded by
\begin{equation*}
\frac{K_0}{A }+
\frac{T C_{\alpha,\beta}}{A\,N^{\theta-1}}+2 T \norm{G}_{\infty}^2 \varepsilon A.
\end{equation*} 
Taking the limit when $N\to\infty$ after when $\varepsilon\to 0$, remains just $\frac{K_0}{A }$, as $A$ is any non-negative number, the proof is concluded.
\end{proof}

\begin{lema}\label{llll} For all $B>0$ and $\varepsilon>0$ and for all density function $f$ with respect to $\nu_{\beta}^N$, we have
\begin{equation}\label{dirichlet_form_estimate}
G(s)  \int [\eta^{\varepsilon N}(1)-\eta(1)]f(\eta)
	\,d\nu^N_\beta(\eta) 
\leq -B\< L_{N} \sqrt f ,\, \sqrt f\>_{\nu_{\beta}^N}+B\,\frac{ C_{\alpha,\beta}}{N^{\theta}}+\frac{2\varepsilon N \norm{G}_{\infty}^2}{B}.
\end{equation}
\end{lema}
\begin{proof} This proof  follows the ideas of \cite{fgn1}, so we omit several details here. We can write $\eta^{\varepsilon N}(1)-\eta(1)$ as a telescopic sum.  Writing this sum as twice its half  and making the change of variables $ \eta \mapsto \eta^{x,x+1} $, we can write the integral in the left-hand side of \eqref{dirichlet_form_estimate} as
\begin{align*}
& \frac{G(s)}{2\varepsilon N}
	\sum_{x=1}^{\lfloor \varepsilon N \rfloor}
	\sum_{y=1}^{x-1}
	\int
	[\eta(y+1)-\eta(y)][f(\eta) - f(\eta^{y,y+1})]
	\,d\nu^N_\beta(\eta).
\end{align*}
 Writing $ f(\eta)-f(\eta^{y,y+1}) = [\sqrt{f(\eta)}-\sqrt{f(\eta^{y,y+1})}][\sqrt{f(\eta)}+\sqrt{f(\eta^{y,y+1})}] $, using  Young's inequality \eqref{young}, for all $B>0$, and the fact that there is at most one particle per site, we can bound the last expression by
\begin{align}\label{caracterizacao11}
 & \frac{1}{2\varepsilon N}
	\sum_{x=1}^{\lfloor \varepsilon N \rfloor}
	\sum_{y=1}^{x-1} 
	B\int	
	[\sqrt{f}(\eta)-\sqrt{f(\eta^{y,y+1})}]^2
	\,d\nu^N_\beta(\eta) \nonumber
\\
  & + \frac{G(s)^2}{2\varepsilon N}
	\sum_{x=1}^{\lfloor \varepsilon N \rfloor}
	\sum_{y=1}^{x-1}
	\frac{1}{B}\int
	[\sqrt{f(\eta)}+\sqrt{f(\eta^{y,y+1})}]^2
	\,d\nu^N_\beta(\eta).
\end{align}
Using that $f$ is a density for $\nu_\beta^N$, the second term above is bounded by
$
\frac{2\varepsilon N \norm{G}_{\infty}^2}{B}.
$
 Letting the sum in $y$ run from $1$ to $N\!-\!2$, the first term in \eqref{caracterizacao11} is bounded by $B\,D_{N,0}(\sqrt{f},\nu^N_\beta)$.  Item $i)$ of  Lemmas \ref{lema41} and \ref{dirichlet_gerador_1} and Corollary \ref{dirichlet_gerador} imply $D_{N,0}(\sqrt{f},\nu^N_\beta)= -\< L_{N,0} \sqrt f ,\, \sqrt f\>_{\nu_{\beta}^N}$,
 $0\leq D_{N,b}^\beta(\sqrt f ,\nu_{\beta}^N) =-\< L_{N,b}^\beta \sqrt f ,\, \sqrt f\>_{\nu_{\beta}^N}$ and $0\leq D_{N,b}^\alpha(\sqrt f ,\nu_{\beta}^N) \leq-\< L_{N,b}^\alpha \sqrt f ,\, \sqrt f\>_{\nu_{\beta}^N}+\frac{ C_{\alpha,\beta}}{N^{\theta}}$. Then,
$$D_{N,0}(\sqrt{f},\nu^N_\beta)\leq -\big\< L_{N,0} \sqrt f ,\, \sqrt f\,\big\>_{\nu_{\beta}^N}
-\big\< L_{N,b}^\beta \sqrt f ,\, \sqrt f\,\big\>_{\nu_{\beta}^N}-\big\< L_{N,b}^\alpha \sqrt f ,\, \sqrt f\,\big\>_{\nu_{\beta}^N}+\frac{ C_{\alpha,\beta}}{N^{\theta}}.$$

\end{proof}

\subsection{Sobolev space}

  This subsection is devoted to prove that the boundary terms on the hydrodynamic equation are well defined. To do this, we will prove that the hydrodynamic profile $\rho_t$ belongs to a Sobolev space, almost surely in $t\in[0,T]$. Our proof follows \cite{fgn1} and \cite{fgn2}.

\begin{prop}\label{prop6.8}
Let $ \QQ_{*} $ be a limit point of $ \{\QQ_{N}\} $. Then the measure $\QQ_*$ is concentrated on paths $\rho(t,u) du$ such that
$\rho\in L^{2}(0;T;\HH^{1}(0,1)) $.
\end{prop}

The proof follows from  the  next lemma and   the Riesz Representation Theorem, as  in the papers \cite{fgn1} and \cite{fgn2}.

\begin{lema} For all $\theta\geq 0$, there is a positive constant $ \kappa >0 $ such that
\label{s03}
\begin{equation*}
 E_{\mathbb Q^*} \Big[ \sup_H \Big\{ \int_0^T  \int_0^1 \partial_u H (s, u)  \, \rho_s(u)\,du\,ds\, 
- \,  \kappa \int_0^T  \int_0^1 H (s, u)^2 \, du \,ds\Big\} \Big] \, \le \, \infty ,
\end{equation*}
where the supremum is carried over all functions $H$ in $ C_{c}^{0,2}([0,T]\times(0,1))$.
\end{lema}
\begin{proof} 
Consider a sequence $\{H_\ell, \, \ell\ge 1\}$  dense in  $ C_{c}^{0,2}([0,T]\times(0,1))$ 
with respect to the norm $\Vert H\Vert_\infty+ \Vert \partial_u H \Vert_\infty$.  Thus, it is sufficient to prove that, for every $k\ge 1$,
\begin{equation}\label{to_prove}
 \begin{split}
  E_{\mathbb Q^*}\Big[ \max_{1\le i\le k} 
\Big\{ \int_0^T  \int_{[0,1]} \partial_u H_i (s, u)  \, \rho_s(u)\,du\,ds\, 
- \kappa \int_0^T  \int_{[0,1]} H_i (s, u)^2 \, du \,ds\Big\}  \Big]\, \le \, C,
 \end{split}
\end{equation} for some constant $C>0$, independent of $k$.
Since the function
$$
\pi_{.}\;\mapsto\;\max_{1\le i\le k}\Big\{\int_{0}^{T}\int_{[0,1]}\partial_u H_i(s,u)\,d\pi_{s}(u)\,ds- \kappa\<\!\<H_i,H_i\>\!\>\Big\}
$$
is continuous and bounded in the Skorohod topology and 
 $\QQ_{N}$ converges
to $\QQ^*$, the expression in the left hand side of \eqref{to_prove} is equal to
\begin{equation}\label{EEE}
 \begin{split}
 \lim_{N\to +\infty} \mathbb E_{\mu_N}\Big[ \max_{1\le i\le k} 
\Big\{  \int_0^T\<\partial_u H_i(s,\cdot) ,\,\pi^N_s\>\,ds
-\,  \kappa\<\!\<H_i,H_i\>\!\>
\Big\} \Big].
  \end{split}
\end{equation}
By the relative entropy definition, Jensen's inequality and
$\exp\Big\{
\max_{1\le j\le k} a_j \Big\} \leq \sum_{1\le j\le k}
\exp\{a_j\}
$, the expectation  in \eqref{EEE} is bounded from above by 
\begin{equation*}
 \begin{split}
\frac{H(\mu_{N}|\nu_{\gamma(\cdot)}^N)}{N}\,+\, \frac{1}{N}\log \sum_{1\le i\le k} \mathbb E_{\nu_{\gamma(\cdot)}^N}\Big[ 
\exp\Big\{N\int_0^T\<\partial _u H_i(s,\cdot) ,\,\pi^N_s\>\,ds
-\,  \kappa N\<\!\<H_i,H_i\>\!\>\Big\}\Big],
 \end{split}
\end{equation*}
where the profile $\gamma$ is the same used in  Subsection \ref{replace111}.
We can bound the first term in the sum above by $K_0$, recall \eqref{f06}.
Using \eqref{log bounds}, it is enough to show, for a fixed function $ H $, that
\begin{equation*}
 \begin{split}
 \varlimsup_{N\to +\infty}\frac{1}{N}\log \mathbb E_{\nu_{\gamma(\cdot)}^N}\Big[ 
\exp\Big\{\int_0^T N\<\partial_u H(s,\cdot) ,\,\pi^N_s\>\,ds
-\, \kappa N\<\!\<H,H\>\!\>\Big\}\Big]\leq \tilde c
,
 \end{split}
\end{equation*}
for some constant $\tilde c>0$ independent of $H$.
 Then the result follows from the next lemma and the definition of the empirical measure.
\end{proof}

\begin{lema} For all $\theta\geq 0$ and  any function  $ H\in C_{c}^{0,2}([0,T]\times(0,1))$, there exists $ \kappa >0 $ such that
\begin{equation*}
 \begin{split}
 \varlimsup_{N\to +\infty}\frac{1}{N}\log \mathbb E_{\nu_{\gamma(\cdot)}^N}\Big[ 
\exp\Big\{\int_0^T \sum_{x=1}^{N\!-\!1}\eta_s(x) \partial_uH(s,\pfrac xN)\,ds- \kappa N\<\!\<H,H\>\!\>
\Big\}\Big]<C_0T,
 \end{split}
\end{equation*}
where  tthe profile $\gamma$ is the same used in  Subsection \ref{replace111} and the constant $C_0=C_0(\gamma)>0$ comes from item $ii$ of Lemma \ref{lema41}. 
\end{lema}

\begin{proof}
We begin by observe that by the choice of $\gamma$ ($\gamma$ is constant equal to $\alpha$ and $\beta$ in a neighborhood of $0$ and $1$, respectively), then  using Lemma \ref{dirichlet_gerador_1} the Dirichlet form at the boundary is non-positive. And, by Lemma \ref{lema41}  the Dirichlet form in the bulk, $ \big\< L_{N,0} \sqrt f ,\, \sqrt f\,\big\>_{\nu_{\gamma(\cdot)}^N}$, is bounded by $- \frac{1}{2}D_{N,0}(\sqrt{f},{\nu_{\gamma(\cdot)}^N})\!+\!C_0/N $. Now, applying Feynman-Kac's Lemma and using these observations,  
we get that   the expression in the statement of this lemma is bounded by
\begin{equation}\label{energy01}
\int_{0}^{T} \sup_{f ~\mbox{density}} \bigg\{ \frac{1}{N}\int \sum_{x=1}^{N\!-\!1}\partial_{u}H_{s}\left(\pfrac{x}{N}\right) \eta(x)f(\eta) \,d\nu_{\gamma(\cdot)}^N(\eta)
- \kappa\<\<H,H\>\> - \frac{N}{2}D_{N,0}(\sqrt{f},{\nu_{\gamma(\cdot)}^N})\!+\!C_0 \bigg\}\;ds,
\end{equation}
here is used $H_{s}\left(\pfrac{x}{N}\right) $ instead of $H\left(s,\pfrac{x}{N}\right) $. 
Writing $\frac 1N \partial_uH_s(\frac xN)=H(\frac{x+1}{N})-H(\frac xN) + O_H(N^{-2})$ and summing by parts, we can write the integral as 
\begin{displaymath}
\sum_{x=1}^{N\!-\!2}\int H_{s}\left(\pfrac{x}{N}\right)(\eta(x)-\eta(x+1))f(\eta)\,d\nu_{\gamma(\cdot)}^N +O_H(\pfrac{1}{N}),
\end{displaymath}
(the compact support of $H$ takes care of the boundary term).  As usual, we rewrite this sum as twice its half and make the change of variables $ \eta \mapsto \eta^{x,x+1} $ (which is not  invariant for $\nu_{\gamma(\cdot)}^N$). Thus, the last sum is equal to
\begin{equation}\label{A}
\begin{split}
&\frac{1}{2}\sum_{x=1}^{N\!-\!2}\int H_{s}\left(\pfrac{x}{N}\right)(\eta(x)-\eta(x+1))(f(\eta)-f(\eta^{x,x+1}))\,d\nu_{\gamma(\cdot)}^N(\eta) \\
&-\frac{1}{2}\sum_{x=1}^{N\!-\!2}\int H_{s}\left(\pfrac{x}{N}\right)(\eta(x)-\eta(x+1))f(\eta^{x,x+1})\Big(\frac{\nu_{\gamma(\cdot)}^N(\eta^{x,x+1})}{\nu_{\gamma(\cdot)}^N(\eta)}-1\Big)\,d\nu_{\gamma(\cdot)}^N(\eta).
\end{split}
\end{equation}
 By Young's inequality \eqref{young} and the  fact that there is at most one particle per site,  the first term in \eqref{A} is bounded by
\begin{multline*}
\frac{B}{2}\sum_{x=1}^{N\!-\!2}\int \left( H_{s}\left(\pfrac{x}{N}\right)\right)^2(\sqrt{f(\eta)}+\sqrt{f(\eta^{x,x+1})})^{2}\,d\nu_{\gamma(\cdot)}^N(\eta) 
+ \frac{1}{2B}\sum_{x=1}^{N\!-\!2}\int (\sqrt{f(\eta)}-\sqrt{f(\eta^{x,x+1})})^{2}\,d\nu_{\gamma(\cdot)}^N(\eta),
\end{multline*}
 for any $ B >0 $.
Here, we have a small technical problem: $f(\eta^{x,x+1})$ is not necessarily a density with respect to $\nu_{\gamma(\cdot)}^N$. However, one can use \eqref{bound} to prove that $\int f(\eta^{x,x+1})\,d\nu_{\gamma(\cdot)}^N\leq 1 + C/N$ for some $C>0$ independent of $N$, so that $\int (\sqrt{f(\eta)}+\sqrt{f(\eta^{x,x+1})})^{2}\,d\nu_{\gamma(\cdot)}^N$ is bounded uniformly in $ N $. Here and from now on, $ C $ denotes a positive number that may change from line to line but does not depend on $ N $ nor on $ s $.

Choosing $B=1/2N$, the first term of \eqref{A} is bounded by 
\begin{equation}\label{energy02}
\frac{C}{N}\sum_{x=1}^{N\!-\!2}\left( H_{s}\left(\pfrac{x}{N}\right)\right)^2+\frac{N}{2}D_{N,0}(\sqrt{f},\nu^N_{\gamma(\cdot)}).
\end{equation}
To handle the second term in \eqref{A}, we use \eqref{bound}, Young's inequality \eqref{young} and  $\int f(\eta^{x,x+1})\,d\nu_{\gamma(\cdot)}^N\leq 1 + C/N$, getting the bound
\begin{equation}\label{energy03}
\frac{C}{N}\sum_{x=1}^{N\!-\!2}\left( H_{s}\left(\pfrac{x}{N}\right)\right)^2+ C.
\end{equation}
Putting everything together, we bound \eqref{energy01} by
\begin{equation*}
\int_{0}^{T}  \sup_{f~~\mbox{density}} \left\{
\frac{C}{N}\sum_{x=1}^{N\!-\!2}\left( H_{s}\left(\pfrac{x}{N}\right)\right)^2-\kappa\int_0^1\left( H_{s}\left(u\right)\right)^2\,du+O_H(\pfrac{1}{N})+C
\right\}\;ds.
\end{equation*}
 To finish the proof we just need to use that $ \frac{1}{N}\sum_{x=1}^{N\!-\!2}\left( H_{s}\left(\pfrac{x}{N}\right)\right)^2\rightarrow \int_{0}^{1}\left( H_{s}\left(u\right)\right)^2\,du $.

\end{proof}

\section{Characterization of the  Limit Points}\label{s6}

This section deals with the characterization of  limit points in the three ranges of  $\theta\geq 0$. We will focus in the case $\theta=1$, since this is the critical one. The other ones have similar proofs. We will also present the proof of the cases $\theta\in(0,1]$ and $\theta\in(1,\infty)$ pointing out the main differences between these cases and the proof of the case $\theta=1$.

\subsection{Characterization of the  limit points for $\theta=1$} \label{limit points critical beta}
Now we look at the limit points of the sequence $\{\QQ_{N}\}_{N\in\NN}$. We would like to stress that by Proposition \ref{prop6.8}, if $\QQ^*$ is a limit point of $\{\QQ_{N}\}_{N\in\NN}$, then 
the support of $ \QQ^{*} $ is contained in the set of trajectories $ \pi \in D_{\MM}[0,T] $ such that
 $\pi_{t}$ is a Lebesgue absolutely continuous measure with density $ \rho_{t} $ in $ \HH^{1}(0,1) $, for almost surely $t\in [0,T]$.
\begin{prop}\label{caracterizacao_master} If $\QQ^*$ is a limit point of $\{\QQ_{N}\}_{N\in\NN}$, then it is true that
\begin{multline}\label{master_equation}
\QQ^*
\Bigg[ \pi_.:\< \rho_{t}, H_{t}\> - \< \gamma , H_{0}\>  -\int_{0}^{t} \< \rho_{s} , (\partial_{s}+\Delta)  H_{s} \> \,ds 
-\int_{0}^{t}\big\{\rho_s(0)\partial_{u}H_{s}(0)
-\rho_{s}(1)\partial_{u}H_{s}(1)\big\}\,ds\\
-c\int_{0}^{t}\Big\{H_{s}(0)\big(\alpha-\rho_{s}(0)\big)+
H_{s}(1)\big(\beta-\rho_{s}(1)\big)\Big\}\,ds
=0, 
\;\;
\forall t\in[0,T], 
\;\;
\forall H\in C^{1,2}([0,T]\times [0,1])
\Bigg]=1.
\end{multline}
\end{prop}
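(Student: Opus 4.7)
The plan is to start from Dynkin's formula: for fixed $H \in C^{1,2}([0,T]\times [0,1])$,
\[
M^N_t(H) \;=\; \<\pi^N_t, H_t\> - \<\pi^N_0, H_0\> - \int_0^t \big(\partial_s + N^2 L_N\big)\<\pi^N_s, H_s\>\, ds
\]
is a martingale, and by the remark following \eqref{tight5} it vanishes in $L^2(\PP_{\mu_N})$ as $N \to \infty$. Plugging the explicit formula \eqref{gerador_interno_eq} for $N^2 L_{N,0}\<\pi^N_s, H_s\>$ and \eqref{gerador_fronteira_eq} (with $\theta=1$) for $N^2 L_{N,b}\<\pi^N_s, H_s\>$, the drift term splits into four pieces: an interior piece $\tfrac{1}{N}\sum_{x=2}^{N-2}\eta_s(x)\Delta_N H_s(x/N)$, two boundary gradient pieces $\eta_s(1)\nabla_N^+ H_s(1/N)$ and $-\eta_s(N\!-\!1)\nabla_N^- H_s((N\!-\!1)/N)$ coming from the bulk generator, and the slow-boundary contribution $(\alpha-\eta_s(1))H_s(1/N) + (\beta-\eta_s(N\!-\!1))H_s((N\!-\!1)/N)$.

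Next, I would pass to the limit along a subsequence whose law converges to $\QQ^*$. The time integral of $\<\pi^N_s, \partial_s H_s\>$ is continuous in the Skorohod topology, and $\Delta_N H_s \to \Delta H_s$ uniformly, so the interior piece converges to $\int_0^t \<\rho_s, (\partial_s+\Delta)H_s\>\, ds$ under $\QQ^*$. The initial term $\<\pi^N_0, H_0\> \to \<\gamma, H_0\>$ by the association hypothesis. The boundary terms involve the single-site occupations $\eta_s(1)$ and $\eta_s(N\!-\!1)$, which are not continuous functionals of the empirical measure; this is where the Replacement Lemma \ref{substituicao} intervenes, allowing us to substitute $\eta_s(1)$ by $\eta^{\varepsilon N}_s(1)$ and $\eta_s(N\!-\!1)$ by $\eta^{\varepsilon N}_s(N\!-\!1)$, up to a vanishing error in $L^1(\PP_{\mu_N})$ as $N \to \infty$ and then $\varepsilon \to 0$. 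Since $\nabla_N^\pm H_s(\cdot)$ converge to $\partial_u H_s(0)$ and $\partial_u H_s(1)$ respectively, after replacement the quantities $\eta^{\varepsilon N}_s(1)$ and $\eta^{\varepsilon N}_s(N\!-\!1)$ are, up to continuous approximation of the indicators $\charf{[0,\varepsilon]}$ and $\charf{[1-\varepsilon,1]}$, continuous functionals of $\pi^N_s$, hence converge under $\QQ^*$ to $\rho^\varepsilon_s(0) := \tfrac{1}{\varepsilon}\int_0^\varepsilon \rho_s(u)\,du$ and $\rho^\varepsilon_s(1) := \tfrac{1}{\varepsilon}\int_{1-\varepsilon}^1 \rho_s(u)\,du$.

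Finally, I would let $\varepsilon \downarrow 0$. By the energy estimate (Proposition \ref{prop1}), $\QQ^*$-almost every trajectory satisfies $\rho \in L^2(0,T;\HH^1(0,1))$, so the traces $\rho_s(0)$ and $\rho_s(1)$ are well-defined for a.e.\ $s$, and $\rho^\varepsilon_s(0) \to \rho_s(0)$, $\rho^\varepsilon_s(1) \to \rho_s(1)$ in $L^2(0,T)$. This identifies the limits of the four drift pieces with the four integrals appearing in \eqref{master_equation}. Combining everything and using the $L^2$ convergence of $M^N_t(H)$ to zero yields the identity inside the brackets in \eqref{master_equation} for a fixed $t$ and a fixed $H$; working with a countable dense subset of test functions and using the $\QQ^*$-a.s.\ continuity of $t \mapsto \<\rho_t, H_t\>$ upgrades this to the uniform statement. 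The main obstacles are two: first, the joint limit $N \to \infty$ and $\varepsilon \to 0$ for boundary terms, which is the reason the Replacement Lemma is stated in this iterated form; second, justifying the use of the $\HH^1$ trace in the final $\varepsilon \to 0$ step, which crucially relies on the energy estimate proved in Section \ref{s5}.
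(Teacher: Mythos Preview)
Your proposal is correct and follows essentially the same route as the paper: Dynkin's martingale decomposition, the explicit generator computation \eqref{gerador_interno_eq}--\eqref{gerador_fronteira_eq}, the Replacement Lemma~\ref{substituicao} to swap $\eta_s(1),\eta_s(N-1)$ for box averages, a continuous approximation of the indicators to make the boundary averages continuous functionals of $\pi^N$ (the paper packages this as Lemma~\ref{lema_portmanteau}), and finally the energy estimate to identify the $\varepsilon\to 0$ limit of the averages with the $\HH^1$ traces (the paper's Lemma~\ref{lema_sobolev}). The only organizational difference is that the paper handles the quantifier ``$\forall t$'' by proving lower semi-continuity of $\pi_\cdot\mapsto\sup_t|\varphi_\gamma(t,\pi_\cdot)|$ and applying Portmanteau to the supremum directly, whereas you fix $t$ first and then upgrade via continuity of $t\mapsto\langle\rho_t,H_t\rangle$; both work, since $\QQ^*$ is concentrated on continuous trajectories.
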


\begin{proof}
 By density, all we need to verify is that, for $ \delta > 0 $ and $H\in C^{1,3}([0,T]\times [0,1])$ fixed, 
\begin{multline*}
\QQ^*
\Bigg[
	\pi_.:\sup_{0\leq t \leq T}
		\Big|
			\<\rho_t, H_t\> - \<\gamma, H_0\>
			-\int_0^t \< \rho_s, (\partial_s + \Delta)H_s \>\,ds
			-\int_0^t\rho_s(0)[\partial_u H_s(0)-cH_s(0)]\,ds
			\\
			+\int_0^t\rho_s(1)[\partial_u H_s(1)+cH_s(1)]\,ds 
			-c\int_0^t [\alpha H_s(0)+\beta H_s(1)]\,ds 
		\Big|> \delta
\Bigg]=0.
\end{multline*}
We would like to work with the probabilities
$\QQ_{N}$, using Portmanteau's Theorem. Unfortunately the set inside the above probability is not an open set in the Skorohod space. In order to avoid this problem,  we add and subtract the averages $\frac{1}{\varepsilon}\pi_s[0,\varepsilon]$ and $\frac{1}{\varepsilon}\pi_s[1-\varepsilon,1]$ for a fixed $\varepsilon >0$. Then we can bound from above the previous probability by the sum of
\begin{multline*}
\QQ^*
\Bigg[
	\pi_.:\sup_{0\leq t \leq T}
		\Big|
			\<\pi_t, H_t\> - \<\pi_0, H_0\>
			-\int_0^t \< \pi_s, (\partial_s + \Delta)H_s \>\,ds
			-\int_0^t\frac{1}{\varepsilon}\pi_s[0,\varepsilon][\partial_u H_s(0)-cH_s(0)]\,ds
			\\
			+\int_0^t\frac{1}{\varepsilon}\pi_s[1-\varepsilon,1][\partial_u H_s(1)+cH_s(1)]\,ds 
			-c\int_0^t [\alpha H_s(0)+\beta H_s(1)]\,ds
		\Big| > \delta/4
\Bigg]
\end{multline*}
with the probability of three sets, each one of them decreasing as $\varepsilon\to 0$, to sets of null probability, because $\frac{1}{\varepsilon}\pi_s[0,\varepsilon]$ and $\frac{1}{\varepsilon}\pi_s[1-\varepsilon,1]$  are suitable averages of  $\rho\in L^{2}(0,T;\HH^{1}(0,1))$ around the boundary points 0 and 1 and 
$\QQ^*$ is a limit point of $\{\QQ_{N}\}_{N\in\NN}$, each measure $\QQ_{N}$ has as initial measure $\mu_N$ and $\{\mu_N\}_{N\in\NN}$ is a sequence associated to the initial profile $\gamma:[0,1]\to\RR$. Now, we claim that we can use Portmanteau's Theorem  in order to conclude that the previous probability is bounded from above by
\begin{multline*}
\liminf_{N\to\infty}
\QQ_N
\Bigg[
	\pi_.:\sup_{0\leq t \leq T}
		\Big|
			\<\pi_t, H_t\> - \<\pi_0, H_0\>
			-\int_0^t \< \pi_s, (\partial_s + \Delta)H_s \>\,ds \\
			-\int_0^t\frac{1}{\varepsilon}\pi_s[0,\varepsilon][\partial_u H_s(0)-cH_s(0)]\,ds
			+\int_0^t\frac{1}{\varepsilon}\pi_s[1-\varepsilon,1][\partial_u H_s(1)+cH_s(1)]\,ds \\
			-c\int_0^t [\alpha H_s(0)+\beta H_s(1)]\,ds
		\Big| > \delta/16.
\Bigg].
\end{multline*}
Although the functions  $\frac{1}{\varepsilon}\ind{[0,\varepsilon]}$ and $\frac{1}{\varepsilon}\ind{[1-\varepsilon,1]}$ are not continuous, we can approximate them by continuous functions (linear by parts, say). The exclusion rule ensures that we are allowed to replace the indicator functions by their continuous approximations in the preceding probability. We can then apply Proposition A.3 of \cite{fgn1} and Portmanteau's Theorem. After this, we can change back to the indicator functions.
\footnote{Changing back to the indicator functions is a minor technical step. We only do that to work with $\eta^{\varepsilon N}_t$ instead of $\<\pi^N_t, \varphi\>$, where $\varphi$ denotes the continuous approximation to the indicator function. 
}

Summing and subtracting $ \int_0^t N^2 L_N \< \pi^N_s, H_s \> $ in the expression above, we bound it by the sum of
\begin{equation}\label{caracterizacao07}
\limsup_{N \to \infty}\QQ_N \Bigg[
\sup_{0 \leq t \leq T}|M^N_t(H)| > \delta/32
\Bigg]
\end{equation}
and 
\begin{multline}\label{caracterizacao09a}
\limsup_{N \to \infty}\PP_{\mu_{N}} \Bigg[
\eta_.:
\sup_{0 \leq t \leq T}\Big|
\int_{0}^{t} N^2L_N\< \pi^N_{s} , H_{s} \> \,ds - \int_{0}^{t} \<\pi^N_{s} , \Delta H_{s} \> \,ds \\ 
-\int_0^t\eta^{\varepsilon N}_s(1)[\partial_u H_s(0)-cH_s(0)]\,ds
+\int_0^t\eta^{\varepsilon N}_s(N\!-\!1)[\partial_u H_s(1)+cH_s(1)]\,ds \\
-c\int_0^t [\alpha H_s(0)+\beta H_s(1)]\,ds
\Big|>\delta/32
\Bigg],
\end{multline}
where $M^N_t(H)$ was defined in \eqref{M}, $\pi^N_{s}$ is the empirical measure and $\eta^{\varepsilon N}_s(1)$ ($\eta^{\varepsilon N}_s(N\!-\!1)$) is the mean of the box in the right (left) of site $x=1$ ($x=N\!-\!1$). 
 The remark in Section \ref{s4}  (see \eqref{limprob}) helps us to conclude that \eqref{caracterizacao07} is zero. 
Recalling \eqref{gerador_interno_eq} and  \eqref{gerador_fronteira_eq}, the expression $N^2 L_{N}\< \pi_s ,H_s \> $ can be rewritten as 
\begin{equation*}
\begin{split}
N^2 L_{N}\< \pi_s ,H_s \> = & \frac{1}{N}\sum_{x=2}^{N\!-\!2}\eta_s(x)\Delta_N
H_s\left( \pfrac{x}{N}\right) 
+\,\eta_s(1)\Big[\nabla_N^+ H_s(\pfrac{1}{N})-cH_s(\pfrac{1}{N})\Big]\\
&-\eta_s(N\!-\!1)\Big[\nabla_N H_s^-(\pfrac{N\!-\!1}{N}))+cH_s(\pfrac{N\!-\!1}{N})\Big] + c\Big(\alpha H_s(\pfrac{1}{N})+ \beta H_s(\pfrac{N\!-\!1}{N})\Big).
 \end{split}
 \end{equation*}
 Finally it is possible to bound  the expression inside the probability in \eqref{caracterizacao09a} by the sum of the following terms
\begin{equation}\label{final1}
\sup_{0\leq t\leq T}
	\Big|
		\int_0^t\Big\{\pfrac{1}{N}\sum_{x=2}^{N\!-\!2}\eta_s(x)\,\Delta_N
H_s\left( \pfrac{x}{N}\right)
	- \pfrac{1}{N}\sum_{x=1}^{N\!-\!1}\eta_s(x)\,\Delta
H_s\left( \pfrac{x}{N}\right)\Big\}\,ds
	\Big|,
\end{equation}
\begin{equation*}
\sup_{0\leq t\leq T}
	\Big|\int_0^t\big[\eta^{\varepsilon N}_s(1)-\eta_s(1)\big]\,\big[\partial_u H_s(0)-cH_s(0)\big]\,ds\Big|,
\end{equation*}
\begin{equation}\label{final21}
\sup_{0\leq t\leq T}
	\Big|\int_0^t\eta_s(1)\Big\{\big[\nabla_N^+ H_s(\pfrac{1}{N})-cH_s(\pfrac{1}{N})\big]-\big[\partial_u H_s(0)-cH_s(0))\big]\Big\}\,ds\Big|,
\end{equation}
\begin{equation*}
\sup_{0\leq t\leq T}
	\Big|\int_0^t\big[\eta^{\varepsilon N}_s(N\!-\!1)-\eta_s(N\!-\!1)\big]\,\big[\partial_u H_s(1)-cH_s(1)\big]\,ds\Big|,
\end{equation*}
\begin{equation}\label{final31}
\sup_{0\leq t\leq T}
	\Big|\int_0^t\eta_s(N\!-\!1)\Big\{\big[\nabla_N^- H_s(\pfrac{N\!-\!1}{N})-cH_s(\pfrac{N\!-\!1}{N})\big]-\big[\partial_u H_s(1)-cH_s(1))\big]\Big\}\,ds\Big|,
\end{equation}
and
\begin{equation}\label{final4}
\sup_{0\leq t\leq T}
	\Big|c\int_0^t\Big\{ \alpha\big(H_s(\pfrac{1}{N})-H_s(0)\big)+\beta \big(H_s(\pfrac{N\!-\!1}{N})-H_s(1)\big)\Big\}\,ds\Big|.
\end{equation}
As $H\in C^{1,3}([0,T]\times[0,1])$, then it is easy to see that \eqref{final1}, \eqref{final21}, \eqref{final31} an \eqref{final4} go to zero, uniformly in $\eta_.$ and $N$. 
 Thus, instead of \eqref{caracterizacao09a}, we may only look at the following limit,  for $x\in\{1,N\!-\!1\}$ and $\tilde{\delta}>0$, 
\begin{equation*}
\limsup_{N \to \infty}\PP_{\mu_N}\Big[\eta_.:\sup_{0\leq t\leq T}
	\Big|\int_0^t\big[\eta^{\varepsilon N}_s(x)-\eta_s(x)\big]\,\big[\partial_u H_s(u_x)-cH_s(u_x)\,ds\Big|>\tilde{\delta}\Big],
\end{equation*}
where $u_1=0$ and $u_{N\!-\!1}=1$.
Applying  Lemma \ref{substituicao} (Replacement Lemma) we conclude  observing that, when $\varepsilon\to 0$, the above limit goes to 0.
This concludes the proof.
\end{proof}

\subsection{Characterization of Limit Points for $\theta\in [0,1)$}

  As in the last subsection, we will look at the limit points of the sequence $\{\QQ_{N}\}_{N\in\NN}$.
\begin{prop}\label{caracterizacao_master0} If $\QQ^*$ is a limit point of $\{\QQ_{N}\}_{N\in\NN}$, we have
\begin{multline*}
\QQ^*
\Bigg[ \pi_.:\< \rho_{t}, H_{t}\>-\< \gamma , H_{0}\> -\int_{0}^{t} \< \rho_{s} , (\partial_{s}+\Delta)  H_{s} \> \,ds
+ \int_{0}^{t}\big\{\beta\partial_{u}H_{s}(1)-\alpha\partial_{u}H_{s}(0)\big\}\,ds
=0, \\
\;\;
\forall t\in[0,T], 
\;\;
\forall H\in C^{1,2}_0([0,T]\times [0,1])
\Bigg]=1.
\end{multline*}
\end{prop}
\begin{proof}
  By a density argument, it is enough  to verify  that, for $ \delta > 0 $ and $H\in C^{1,3}_0([0,T]\times [0,1])$ fixed, 
\begin{multline*}
\QQ^*
\Bigg[
	\pi_.:\sup_{0\leq t \leq T}
		\Big|
			\< \rho_{t}, H_{t}\>-\< \gamma , H_{0}\> -\int_{0}^{t} \< \rho_{s} , (\partial_{s}+\Delta)  H_{s} \>\, ds
+ \int_{0}^{t}\big\{\beta\partial_{u}H_{s}(1)-\alpha\partial_{u}H_{s}(0)\big\}\,ds 
		\Big|> \delta
\Bigg]=0.
\end{multline*}
Since the set considered above is an open set, we can use the Portmanteau's Theorem directly and bound  the last probability by
\begin{multline*}
\liminf_{N\to\infty}\QQ_{N}
\Bigg[
	\pi_.:\sup_{0\leq t \leq T}
		\Big|
			\< \pi_{t}, H_{t}\>-\< \pi_0 , H_{0}\> -\int_{0}^{t} \< \pi_{s} , (\partial_{s}+\Delta)  H_{s} \> ds
+ \int_{0}^{t}\big\{\beta\partial_{u}H_{s}(1)-\alpha\partial_{u}H_{s}(0)\big\}ds 
		\Big|> \delta
\Bigg].
\end{multline*}
Following the same steps of the last subsection, we need only to bound
\begin{multline}\label{caracterizacao143}
\limsup_{N\to\infty}\PP_{\mu_N}
\Bigg[
	\eta_.:\sup_{0\leq t \leq T}
		\Big|
			\int_{0}^{t} N^2L_N\< \pi^N_{s} , H_{s} \> \,ds -\int_{0}^{t} \< \pi^N_{s}, \Delta H_{s} \> ds
+ \int_{0}^{t}\big\{\beta\partial_{u}H_{s}(1)-\alpha\partial_{u}H_{s}(0)\big\}ds 
		\Big|> \delta/2
\Bigg],
\end{multline}
where $\pi^N_{s}$ is the empirical measure.
As in the case $\theta=1$, it is possible to bound  the expression inside the probability above by the sum of the following terms
\begin{equation*}
\sup_{0\leq t\leq T}
	\Big|
		\int_0^t\Big\{\pfrac{1}{N}\sum_{x=2}^{N\!-\!2}\eta_s(x)\,\Delta_N
H_s\left( \pfrac{x}{N}\right)
	- \pfrac{1}{N}\sum_{x=1}^{N\!-\!1}\eta_s(x)\,\Delta
H_s\left( \pfrac{x}{N}\right)\Big\}\,ds
	\Big|,
\end{equation*}
\begin{equation*}
\sup_{0\leq t\leq T}
	\Big|\int_0^t\eta_s(1)\big[\nabla_N^+ H_s(\pfrac{1}{N})-\partial_u H_s(0)]\,ds\Big|,
	\end{equation*}
	\begin{equation}\label{charc111}
\sup_{0\leq t\leq T}
	\Big|\int_0^t\partial_u H_s(0)\big[\eta_s(1)-\alpha\big]\,ds\Big|,
\end{equation}
\begin{equation*}
\sup_{0\leq t\leq T}
	\Big|\int_0^t\eta_s(N\!-\!1)\big[\nabla_N^- H_s(\pfrac{N\!-\!1}{N})-\partial_u H_s(1)]\,ds\Big|,
	\end{equation*}
	\begin{equation}\label{charc222}
\sup_{0\leq t\leq T}
	\Big|\int_0^t\partial_u H_s(1)\big[\eta_s(N\!-\!1)-\beta\big]\,ds\Big|,
\end{equation}
and
\begin{equation*}
\sup_{0\leq t\leq T}
	\Big|\int_0^t c\Big\{ N^{1-\theta}\Big(\alpha-\eta_s(1)\Big) H_s(\pfrac{1}{N})+ N^{1-\theta}\Big(\beta -\eta_s(N\!-\!1)\Big)H_s(\pfrac{N\!-\!1}{N}) \Big\}\,ds\Big|.
\end{equation*}
  Since $H\in C^{1,3}_0([0,T]\times[0,1])$, and keeping in mind that $\eta_s(x)$ only takes values in $\{0,1\}$, it is easy to see that the terms above (except the terms \eqref{charc111} and \eqref{charc222}) go to zero, uniformly in $\eta_.$. 
 Then, instead of \eqref{caracterizacao143}, we may only look at the following, for all $\tilde{\delta}>0$ and $x\in\{1,N-1\}$, 
\begin{equation*}
\limsup_{N \to \infty}\PP_{\mu_N}\Big[\eta_.:\sup_{0\leq t\leq T}
	\Big|\int_0^t\partial_u H_s(u_x)\big[\eta_s(x)-\zeta_x\big]\,ds\Big|>\tilde{\delta}\Big],
\end{equation*}
where $\zeta_1=\alpha$, $u_1=0$, $\zeta_{N-1}=\beta$ and $u_{N\!-\!1}=1$.  
Applying  Lemma \ref{replace1} (Replacement Lemma), we conclude that  the expressions above is equal to zero
and it finishes this proof.

 \end{proof}

\subsection{Characterization of Limit Points for $\theta\in (1,\infty)$}

  As before we will look at the limit points of the sequence $\{\QQ_{N}\}_{N\in\NN}$.
\begin{prop}\label{caracterizacao_master10} If $\QQ^*$ is a limit point of $\{\QQ_{N}\}_{N\in\NN}$, it is true that
\begin{multline*}
\QQ^*
\Bigg[ \pi_.:
\< \rho_{t}, H_{t}\>-\< \gamma , H_{0}\> - \int_{0}^{t} \< \rho_{s} , (\partial_{s}+\Delta)  H_{s} \> ds+ \int_{0}^{t}\big\{\rho_s(1)\partial_{u}H_{s}(1)-\rho_{s}(0)\partial_{u}H_{s}(0)\big\}ds
=0, \\
\;\;
\forall t\in[0,T], 
\;\;
\forall H\in C^{1,2}_0([0,T]\times [0,1])
\Bigg]=1.
\end{multline*}
\end{prop}
\begin{proof}
  Reasoning in the same way as above, it is enough  to verify  that, for $ \delta > 0 $ and $H\in C^{1,3}([0,T]\times [0,1])$ fixed, we have
\begin{multline*}
\QQ^*
\Bigg[
	\pi_.:\sup_{0\leq t \leq T}
		\Big|
		\< \rho_{t}, H_{t}\>-\< \gamma , H_{0}\> - \int_{0}^{t} \< \rho_{s} , (\partial_{s}+\Delta)  H_{s} \> ds+ \int_{0}^{t}\big\{\rho_s(1)\partial_{u}H_{s}(1)-\rho_{s}(0)\partial_{u}H_{s}(0)\big\}ds
		\Big|> \delta
\Bigg]=0.
\end{multline*}
Here, we have boundary terms in $\rho$.  To be able to use   Portmanteau's Theorem we need to do the same as in Subsection \ref{limit points critical beta}, where we changed the boundary terms $\rho_{s}(0)$ ($\rho_{s}(1)$) by 
$\eta^{\epsilon N}_{s}(1)$ ($\eta^{\epsilon N}_{s}(N\!-\!1)$).  Then, we sum and 
 subtract $ \int_0^t N^2 L_N \< \pi^N_s, H_s \> $ and use the limit \eqref{limprob}, it is enough to analyze
\begin{multline*}
\limsup_{N\to\infty}\PP_{\mu_N}
\Bigg[
	\eta_.:\sup_{0\leq t \leq T}
		\Big|
			\int_{0}^{t} N^2L_N\< \pi^N_{s} , H_{s} \> \,ds - \int_{0}^{t} \< \pi^N_{s}  , \Delta H_{s} \> ds\\+ \int_{0}^{t}\big\{\eta^{\epsilon N}_{s}(N\!-\!1)\partial_{u}H_{s}(1)-\eta^{\epsilon N}_{s}(1)\partial_{u}H_{s}(0)\big\}ds
		\Big|> \delta
\Bigg].
\end{multline*}
Doing the same as in the other cases and using that $H\in C^{1,3}([0,T]\times[0,1])$ and $\theta\in  (1,\infty)$, we just have to analyze
the following limit,  for  all  $\tilde{\delta}>0$ and $x\in\{1,N\!-\!1\}$,
\begin{equation*}
\limsup_{N \to \infty}\PP_{\mu_N}\Big[\eta_.:\sup_{0\leq t\leq T}
	\Big|\int_0^t\big[\eta^{\varepsilon N}_s(x)-\eta_s(x)\big]\,\partial_u H_s(u_x)\,ds\Big|>\tilde{\delta}\Big],
\end{equation*}
where $u_1=0$ and $u_{N\!-\!1}=1$. Applying  Lemma \ref{substituicao} (Replacement Lemma) we conclude that, taking  limit when $\varepsilon\to 0$, the above limit goes to 0.
This concludes the proof of this proposition.
\end{proof}

\section{Uniqueness}\label{s7}
The uniqueness of weak solutions of \eqref{dirichlet} is standard and we refer to \cite{kl} for a proof. For  the equation \eqref{neumann}, a proof of uniqueness can be found in \cite{fgn1}. It remains to prove uniqueness of weak solutions of the parabolic differential equation \eqref{robin}, and we will do that in the next subsection.

\subsection{Uniqueness of weak solutions of \eqref{robin}}

  Now we head to our last statement: the weak solution of \eqref{robin} is unique. To prove this, it is enough to consider  $\rho_0\equiv 0$, $ \alpha = \beta = 0 $  and that our solution $ \rho $ satisfies
\begin{equation}\label{solucao_fraca2}
\begin{split}
\< \rho_{t}, H_{t}\> & = \int_{0}^{t} \< \rho_{s} , (\partial_{s}+\Delta)  H_{s} \> ds\\  
& + \int_{0}^{t}\big\{\rho_s(0)\partial_{u}H_{s}(0)-\rho_{s}(1)\partial_{u}H_{s}(1)\big\}ds  -c\int_{0}^{t}\Big\{H_{s}(0)\rho_{s}(0)+ H_{s}(1)\rho_{s}(1)\Big\}ds,
\end{split}
\end{equation}
for all $ H \in C^{1,2}([0, T] \times [0,1]) $.

  We begin by considering the set $ \mathcal{H} $ of functions $ H \in C^{1}[0,1] $ such that
\begin{enumerate}
\item $ \partial_u H $ is an absolutely continuous function;
\item $ \Delta H \in L^{2}[0,1] $;
\item $ \partial_{u}H(0)=cH(0) $;
\item $ \partial_{u}H(1)=-cH(1) $.
\end{enumerate}

  Observe that the operator $ \Delta: \mathcal{H} \rightarrow L^{2}[0,1] $ is injective in its domain. Given $ g \in L^{2}[0,1]$, we define
\begin{equation*}
(-\Delta)^{-1}g(u):=\int_{0}^{1}G(r,u)g(r)dr,
\end{equation*}
where
\begin{equation*}
G(r,u):=(1+cu)\left(\frac{\sfrac{1}{c}+1-r}{2+c}\right)-(u-r)\charf{[0,u]}(r).
\end{equation*}
It is easy to see that the following properties hold for $ g, h \in L^{2}[0,1] $:
\begin{enumerate}
\item $ (-\Delta)^{-1}g \in \mathcal{H} $;
\item $ (-\Delta)(-\Delta)^{-1}g =g $;
\item $ \<(-\Delta)^{-1}g,h\>=\<g,(-\Delta)^{-1}h\> $ and $ \<(-\Delta)^{-1}g,g\> \geq 0 $.
\end{enumerate}

\begin{prop}\label{prop_unicidade_master}
Let $ \rho $ be a  solution of \eqref{solucao_fraca2}. For any $ t \in [0,T] $ we have
\begin{displaymath}
\< \rho_{t}, (-\Delta)^{-1}\rho_{t}\>=-2\int_{0}^{t}\< \rho_{s}, \rho_{s}\>\, ds.
\end{displaymath}
\end{prop}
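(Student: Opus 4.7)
The plan is to test the weak formulation \eqref{solucao_fraca2} (recall $\gamma=0$) against two carefully chosen functions and combine the resulting identities. Write $R_s(u):=\int_0^s \rho_r(u)\,dr$ for $s\in[0,t]$.

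\emph{Step 1: identifying $(-\Delta)^{-1}\rho_t$.} Insert into \eqref{solucao_fraca2} a time-independent test function $H_s\equiv h$ with $h\in\mathcal{H}$. The conditions $\partial_u h(0)=h(0)$ and $\partial_u h(1)=-h(1)$ make the two boundary integrals in \eqref{solucao_fraca2} cancel term by term, leaving
$$\<\rho_t, h\>=\int_0^t \<\rho_s,\Delta h\>\,ds=\<R_t,\Delta h\>.$$
Writing $h=(-\Delta)^{-1}g$ for arbitrary $g\in L^2(0,1)$ and using the self-adjointness of $(-\Delta)^{-1}$, this gives $\<(-\Delta)^{-1}\rho_t,g\>=-\<R_t,g\>$ for every $g\in L^2(0,1)$, hence $(-\Delta)^{-1}\rho_t=-R_t$ in $L^2(0,1)$. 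In particular $R_t\in\mathcal{H}$, so the Robin-type relations hold for $R_t$ itself.

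\emph{Step 2: substituting $H_s:=R_s$.} Using the identification from Step 1, we have $\Delta R_s=-(-\Delta)R_s=\rho_s$ and also $\partial_s R_s=\rho_s$. Plugging $H_s=R_s$ into \eqref{solucao_fraca2}, the bulk contribution equals
$$\int_0^t \<\rho_s,\partial_s R_s+\Delta R_s\>\,ds=2\int_0^t \<\rho_s,\rho_s\>\,ds,$$
while the two boundary integrals cancel term by term for exactly the same reason as in Step 1, since $R_s\in\mathcal{H}$ for every $s$. Therefore $\<\rho_t,R_t\>=2\int_0^t\<\rho_s,\rho_s\>\,ds$, and combining with Step 1,
$$\<\rho_t,(-\Delta)^{-1}\rho_t\>=-\<\rho_t,R_t\>=-2\int_0^t\<\rho_s,\rho_s\>\,ds,$$
which is the claim.

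\emph{Main obstacle.} The weak formulation is stated only for $H\in C^{1,2}([0,T]\times[0,1])$, but the test functions used above lack that regularity: $h\in\mathcal{H}$ is only $H^2$ in space, and $s\mapsto R_s$ is merely absolutely continuous into $L^2$ with derivative $\rho_s\in L^2(0,T;\HH^1(0,1))$. This is the genuine technical point. The remedy is to approximate: mollify $\rho$ in time by convolution with a smooth kernel $\varphi_\varepsilon$ to make $R^\varepsilon_s:=\int_0^s(\rho*\varphi_\varepsilon)_r\,dr$ smooth in $s$, then approximate each $\mathcal{H}$-valued section in $H^2(0,1)$ by $C^2$ functions preserving the Robin endpoint values. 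Every term in \eqref{solucao_fraca2} is continuous in $H$ under these approximations, thanks to the $L^\infty$-bound on $\rho$ and the continuity of the trace on $\HH^1(0,1)$, so both identities above pass to the limit $\varepsilon\to 0$.
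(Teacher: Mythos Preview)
Your argument is correct and your Step~1 coincides with the paper's Lemma~\ref{lema_unicidade}: both say that testing against a time-independent $h\in\mathcal{H}$ gives $\<\rho_t,h\>=\int_0^t\<\rho_s,\Delta h\>\,ds$, and your reformulation as $(-\Delta)^{-1}\rho_t=-R_t$ is a neat consequence. The divergence is in Step~2. The paper does \emph{not} substitute a time-dependent test function; instead it telescopes over a partition $0=t_0<\dots<t_n=t$, writes
\[
\<\rho_t,(-\Delta)^{-1}\rho_t\>=\sum_k\<\rho_{t_{k+1}},(-\Delta)^{-1}(\rho_{t_{k+1}}-\rho_{t_k})\>+\sum_k\<\rho_{t_{k+1}}-\rho_{t_k},(-\Delta)^{-1}\rho_{t_k}\>,
\]
uses the symmetry of $(-\Delta)^{-1}$ to reduce to one sum, and then applies Lemma~\ref{lema_unicidade} with the \emph{fixed} test function $(-\Delta)^{-1}\rho_{t_k}\in\mathcal{H}$ on each subinterval. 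The remainder is controlled by a spatial smoothing of $\rho_s$ and sending the mesh to zero.

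What each approach buys: the paper's telescoping never needs the weak formulation for time-dependent $\mathcal{H}$-valued test functions, so the only approximation is the one already carried out in Lemma~\ref{lema_unicidade}. Your route is more direct and gives the pleasant structural identity $(-\Delta)^{-1}\rho_t=-R_t$, but the price is exactly the obstacle you flag: one must extend \eqref{solucao_fraca2} to $H_s=R_s$. Your sketch is essentially right, but note the subtle point that makes it work: after time-mollifying, the two representations $R^\varepsilon_s=\int_0^s\rho^\varepsilon_r\,dr$ and $R^\varepsilon_s=-(-\Delta)^{-1}\rho^\varepsilon_s$ must agree (they do, since time-convolution commutes with the spatial operator $(-\Delta)^{-1}$), so that \emph{simultaneously} $\partial_s R^\varepsilon_s=\rho^\varepsilon_s$, $\Delta R^\varepsilon_s=\rho^\varepsilon_s$, and $R^\varepsilon_s\in\mathcal{H}$. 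With that in hand, the spatial $C^2$-approximation via $(-\Delta)^{-1}$ of a continuous approximant of $\rho^\varepsilon_s$ preserves the Robin conditions and closes the argument.
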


This proposition implies the uniqueness of weak solutions. And its proof is similar to the proof of Proposition 3.4 in \cite{fgn2}, if we use the next lemma in the place of  Lemma 3.3 of \cite{fgn2}.

\begin{lema}\label{lema_unicidade}
Let $ H \in \mathcal{H} $ and $ \rho $ a weak solution of \eqref{solucao_fraca2}. We have
\begin{displaymath}
\<\rho_{t}, H\>= \int_{0}^{t}\<\rho_{s}, \Delta H \> \, ds.
\end{displaymath}
\end{lema}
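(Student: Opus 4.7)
The idea is to plug the time-independent function $H$ into the weak formulation \eqref{solucao_fraca2}: the Robin-type conditions $\partial_u H(0)=H(0)$ and $\partial_u H(1)=-H(1)$ built into $\mc H$ are designed precisely to cancel the boundary integrals in \eqref{solucao_fraca2}. The only obstacle is regularity, since \eqref{solucao_fraca2} is formulated for test functions in $C^{1,2}([0,T]\times[0,1])$, whereas members of $\mc H$ are only $C^1$ with $\partial_u H$ absolutely continuous.

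First I would verify the statement in the smooth case: if $H\in C^2[0,1]$ satisfies $\partial_u H(0)=H(0)$ and $\partial_u H(1)=-H(1)$, then regarding $H$ as a $C^{1,2}$ function independent of $t$ and inserting it into \eqref{solucao_fraca2} yields
\begin{equation*}
\<\rho_t, H\> \;=\; \int_0^t \<\rho_s, \Delta H\>\, ds + \int_0^t\bigl\{\rho_s(0)[\partial_u H(0)-H(0)]-\rho_s(1)[\partial_u H(1)+H(1)]\bigr\}\, ds,
\end{equation*}
and the boundary contribution vanishes identically by the assumptions on $H$.

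To pass from $C^2[0,1]$ to $\mc H$, I would route the approximation through the Green's function in \eqref{laplacianoinverso}: set $g:=-\Delta H\in L^2[0,1]$, choose $g_n\in C^\infty_{\textrm{k}}((0,1))$ with $g_n\to g$ in $L^2[0,1]$, and let $H^n:=(-\Delta)^{-1}g_n$. A short computation from the explicit formula for $G$ shows that $H^n$ is smooth on $[0,1]$, satisfies $\Delta H^n=-g_n$, and obeys both Robin-type boundary conditions; moreover, since $\Delta$ is injective on $\mc H$ we have $H=(-\Delta)^{-1}g$, and the boundedness of $G$ on $[0,1]^2$ gives $H^n\to H$ uniformly. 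Applying the smooth-case identity to each $H^n$ produces $\<\rho_t,H^n\>=\int_0^t\<\rho_s,\Delta H^n\>\,ds$, and since $\rho$ is bounded (by definition of weak solution) and $\Delta H^n=-g_n\to \Delta H$ in $L^2[0,1]$, both sides pass to the limit, yielding the stated identity.

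The only delicate point is producing smooth approximants that still satisfy the Robin-type boundary conditions: a naive mollification of $H$ would destroy these conditions and leave an uncontrolled boundary remainder. Using $(-\Delta)^{-1}$, whose range is built to sit inside $\mc H$, neatly sidesteps this and is the cleanest way to handle the regularity gap between $\mc H$ and $C^{1,2}([0,T]\times[0,1])$.
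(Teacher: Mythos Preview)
Your proof is correct and follows essentially the same approximation scheme as the paper: both arguments approximate $\Delta H$ in $L^2$ by smooth functions $g_n$, build $C^2$ antiderivatives, apply the weak formulation \eqref{solucao_fraca2} to these, and pass to the limit. The only cosmetic difference is that the paper takes $G_n(u)=H(0)+u\,\partial_u H(0)+\int_0^u\!\int_0^v g_n$, which matches $H$ at $u=0$ but only approximately satisfies the Robin condition at $u=1$ (so the boundary cancellation occurs after the limit), whereas your choice $H^n=(-\Delta)^{-1}g_n$ lands in $\mathcal H$ by construction, so the boundary terms vanish before taking the limit.
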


\begin{proof}
  Take $ \{g_{n}\} \in C[0,1] $ a sequence that converges in $L^2$ to $ \Delta H  \in L^{2}[0,1]$. Define the functions
\begin{displaymath}
G_{n}(u)=H(0)+u\partial_{u}H(0)+\int_{0}^{u}\int_{0}^{v}g_{n}(r) \,dr \,dv,
\end{displaymath}
and observe that
\begin{equation}\label{solucaofraca1}
\begin{split}
\<\rho_{t}, G_{n}\> & =\int_{0}^{t}\<\rho_{s}, g_{n}\> \, ds \\
& +\int_{0}^{t}\Bigg\{\rho_{s}(0)\partial_{u}H(0)-\rho_{s}(1)\Big(\partial_{u}H(0)+\int_{0}^{1}g_{n}(r)\, dr \Big)\Bigg\} \,ds \\
& - c\int_{0}^{t}\Bigg\{H(0)\rho_{s}(0)+\rho_{s}(1)\Big(H(0)+\partial_{u}H(0)+ \int_{0}^{1}\int_{0}^{v}g_{n}(r)\, dr \, dv\Big)\Bigg\}\,ds.
\end{split}
\end{equation}
  Now, as $ g_{n} \rightarrow  \Delta H $ in $L^{2}$, we have
\begin{displaymath}
\partial_{u}H(0)+\int_{0}^{1}g_{n}(r)\, dr \rightarrow \partial_{u}H(1),
\end{displaymath}
and
\begin{displaymath}
H(0)+\partial_{u}H(0)+ \int_{0}^{1}\int_{0}^{v}g_{n}(r)\, dr \, dv \rightarrow H(1).
\end{displaymath}
Since $ H \in \mathcal{H} $, taking the limit in the expression $ \eqref{solucaofraca1} $, as $n \rightarrow \infty $, we obtain
\begin{displaymath}
\<\rho_{t}, H\>  =\int_{0}^{t}\<\rho_{s}, \Delta H\> \, ds.
\end{displaymath}

\end{proof}

\appendix
\section{Feynman-Kac's lemma  without invariant measure}

\begin{lema}[Feynman-Kac's lemma  without invariant measure]\label{FK-sem med inv}
Let $\{X_t,\, t\geq 0\}$ be a Markov process in the countable state space $E$, with infinitesimal generator $L$. Let $ \nu $ a probability measure in $E$ and $V:[0,\infty)\times E\to \mathbb R$ be a bounded function.
Denote $L_t= L+V(t,\cdot)$ and $V_t=V(t,\cdot)$. Define
\begin{equation}\label{gamma}
\Gamma_t=\sup_{\Vert f\Vert_2=1}\Big\{\<V_t,f^2\>_\nu+\<Lf,f\>_\nu\Big\}=
\sup_{\Vert f\Vert_2=1}\Big\{\<L_tf,f\>_\nu\Big\},\mbox{  for all } t\geq 0,
\end{equation}
where $\<\cdot,\cdot\>_\nu$ denotes the inner product in $L^2(E,\nu)$ and $\Vert\cdot\Vert_2=\<\cdot,\cdot\>_\nu^{1/2}$. 
Then
\begin{equation*}
\mathbb E_\nu\Big[e^{\int_0^{t}V(r, X_r)\,dr}\Big]\leq\,\exp\Big\{\int_0^t  \Gamma_s\,ds\Big\}.
\end{equation*}
\end{lema}

\begin{proof}

For a function $V:[0,\infty)\times E\to \mathbb R$, define the non-homogeneous semigroup 
$$ (P_{s,t}^Vf)(x)=\mathbb E_x\Big[e^{\int_0^{t-s}V(s+r, X_r)\,dr}f(X_{t-s})\Big], \mbox{ ~ for all } t\geq s\geq 0.$$
Then, $\mathbb E_\nu\Big[e^{\int_0^{t}V(r, X_r)\,dr}\Big]=\<P_{0,t}^{V}1,1\>_{\nu}$.
 To bound $\<P_{0,t}^{V}1,1\>_{\nu}$, we start with the Cauchy-Schwarz inequality
\begin{equation*}
\big\<P_{0,t}^V1,1\big\>_\nu\leq \big\<P_{0,t}^V1,P_{0,t}^V1\big\>_\nu^{1/2}.
\end{equation*}
In the remaining of the proof we will look at $\big\<P_{s,t}^V1,P_{s,t}^V1\big\>_\nu$ as a function of $ s $ and apply Gronwall's inequality.
First of all, notice that
\begin{equation}\label{aaa}
\pfrac{d}{ds} \big\<P_{s,t}^V1,P_{s,t}^V1\big\>_\nu=-2\,\big\<L_s P_{s,t}^V1,P_{s,t}^V1\big\>_\nu.
\end{equation} To show this, we differentiate under the integral sign $$ \big\<P_{s,t}^V1,P_{s,t}^V1\big\>_\nu= \int_E (P_{s,t}^V1)^2(x)\,d\nu(x),$$ and use the equalities $\frac{d}{ds}(P_{s,t}^V1)^2(x)=2(P_{s,t}^V1)(x)\frac{d}{ds}(P_{s,t}^V1)(x)$ and $ \partial_s P_{s,t}f = -L_sP_{s,t}f $ (see \cite{kl}, page 335).
Since $g(x)=(P_{s,t}^V1)(x)/\Vert P_{s,t}^V1\Vert_2$ is such that $\Vert g\Vert_2=1$ and \eqref{gamma}, we have 
\begin{equation*}
2\Gamma_s\geq\frac{2\,\big\<L_s P_{s,t}^V1,P_{s,t}^V1\big\>_\nu}{\,\big\< P_{s,t}^V1,P_{s,t}^V1\big\>_\nu}.
\end{equation*}
Plugging into \eqref{aaa},
\begin{equation*}
\pfrac{d}{ds} \big\<P_{s,t}^V1,P_{s,t}^V1\big\>_\nu \geq (-2\Gamma_s) \big\<P_{s,t}^V1,P_{s,t}^V1\big\>_\nu.
\end{equation*}
Applying Gronwall's inequality, we get
\begin{equation*}
\big\<P_{0,t}^V1,P_{0,t}^V1\big\>_\nu\leq \big\<P_{t,t}^V1,P_{t,t}^V1\big\>_\nu\exp\Big\{\int_0^t  2\Gamma_s\,ds\Big\}=\exp\Big\{\int_0^t  2\Gamma_s\,ds\Big\},
\end{equation*}
where the last equality follows from the fact that $P_{t,t}^V1(x)=1$,
and it finishes the proof.
\end{proof}

\section*{Acknowledgements}
The authors A. Neumann and R.R. Souza was partially supported by FAPERGS (proc. 002063-2551/13-0).
A. N. was partially supported through a grant ``L'OR\' EAL - ABC - UNESCO Para Mulheres na Ci\^encia''.

\end{document}